\newtheorem{thm}{Theorem}[section]
\newtheorem{cor}[thm]{Corollary}
\newtheorem{rem}[thm]{Remark}
\def\qq#1{\qquad \mbox{#1}\quad}
\def\q#1{\quad \mbox{#1}\ }
\newcommand{\be}{\beta}
\newcommand{\e}{\varepsilon }
\newcommand{\g}{\gamma }
\newcommand{\na}{\nabla }
\newcommand{\Om}{\Omega }
\newcommand{\Omb}{\overline{\Om}}
\newcommand{\p}{\partial }
\newcommand{\s}{\sigma}
\newcommand{\te}{\theta}
\title[Uniform estimates for Caratheodory nonlinearities]{Uniform estimates for elliptic equations with Caratheodory nonlinearities at the interior and on the boundary}
\author[E.~Antonio]{Edgar Antonio}
\address[E.~Antonio]{Universidad Autónoma de Guerrero, Guerrero, México}
\email{eaam020713@gmail.com}
\author[M.~Arciga-Alejandre]{Martín P. Árciga-Alejandre}
\address[M.~Arciga-Alejandre]{Universidad Autónoma de Guerrero, Guerrero, México}
\email{mpargica@uagro.mx}
\author[R.~Pardo]{Rosa Pardo}
\address[R.~Pardo]{Universidad Complutense de Madrid, Madrid, Spain}
\email{rpardo@ucm.es}
\author[J.~Sanchez-Ortiz]{Jorge Sánchez Ortiz}
\address[J.~Sanchez-Ortiz]{Universidad Autónoma de Guerrero, Guerrero, México}
\email{jsanchez@uagro.mx}
\thanks{The third author is supported by grants  PID2022-137074NB-I00,  MICINN,  Spain, and by UCM, Spain,  Grupo 920894.}
\date{}
\begin{document}

\begin{abstract}
We establish an explicit uniform a priori estimate for weak solutions to slightly subcritical elliptic problems with nonlinearities simultaneously at the interior and  on the boundary. Our explicit $L^{\infty}(\Om )$ a priori estimates are in terms of powers of their $H^{1}(\Om )$ norms. To prove our result, we combine a De Giorgi-Nash-Moser's iteration scheme together with elliptic regularity and the Gagliardo-Nirenberg's interpolation inequality. \\
\end{abstract}
\maketitle
{\bf MSC 2020: }{\it Primary 35B45; 
Secondary 
35B66, 
35B33,  
35J75,  
35J25. 
}

{\bf Keywords: }{\it A priori estimates, slightly subcritical non-linearities, $L^\infty$  a priori bounds, nonlinear boundary conditions.}


\section{Introduction}
Let us consider the boundary value problem with nonlinear boundary conditions
\begin{equation}\label{E0.1}
\left \{
\begin{aligned}
-\Delta u +u=& f(x,u), \; \; \; x\in \Om , \\
\frac{\p u}{\p \eta  }=& f_{B}(x,u), \;x\in \p \Om ,
\end{aligned}
\right . 
\end{equation}
where $\Om \subset \mathbb{R}^{N}$, ($N>2$), is an open, connected, bounded domain with $C^{2}$ boundary, $\p /\p \eta  =\eta \cdot\na$ is the (unit) outer normal derivative, and the  functions $f:\Om\times \mathbb{R}\rightarrow \mathbb{R}$, and  $f_{B}:\p \Om \times \mathbb{R} \rightarrow \mathbb{R}$, are both slightly subcritical  Carath\'eodory functions. We give in \ref{f1}-\ref{f2} and \ref{fB1}-\ref{fB2}  the precise statement of the hypothesis on the nonlinearities at the interior, and on the boundary respectively.\\

Our goal is to establish  explicit $L^\infty (\Om )$  {\it a priori}  estimates for weak solutions to \eqref{E0.1}, in terms of powers of their  $H^1(\Om)$ norms  (see Theorem \ref{th:CotasLinf}).  Our estimates are valid to positive solutions and to changing sign solutions. Consequently, any sequence of solutions to \eqref{E0.1}, uniformly bounded in the $H^1(\Om)$ norm, is also uniformly bounded in the $L^\infty (\Om)$ norm. \\

Our techniques are based in an iterative process due to Moser, in the elliptic regularity theory, and in the Gagliardo--Nirenberg interpolation inequalities.\\

For the Homogeneous Dirichlet boundary conditions, by a Moser's type procedure, it is well known that weak solutions to a subcritical or even critical elliptic problem are in $L^q(\Omega )$ for all $1<q<\infty $  (see \cite[Lemma 1]{Moser},  see also \cite[Section 2.2]{Drabek_Kufner_Nicolosi}, \cite[Lemma B.3]{Struwe}. Moreover, by elliptic regularity, the solutions are in $L^{\infty}(\Om)$.\\

Moser's results can be extended to the case of nonlinear boundary conditions, 
and also to a general quasilinear problem, which includes in particular \eqref{E0.1}, see, for instance,  \cite[Theorem 3.1]{Marino_Winkert_NonlAnal_2019}. In \cite{Marino_Winkert_NonlAnal_2019} the authors state that weak solutions to some quasilinear problem are in $L^{\infty}(\Om)\cap L^{\infty}(\p \Om)$. By elliptic regularity, weak solutions to \eqref{E0.1} are in fact more regular, and in particular, they are uniformly continuous functions. Indeed, the elliptic regularity theory, applied to weak solutions of a subcritical or even critical problem implies that they are in $C(\Omb)$, see estimate \eqref{estim:C:nu} in Theorem \ref{th:reg}. So, in that case, 
\begin{equation} \label{Continuity}
     \|u\|_{L^{\infty}(\p\Om)}\le \|u\|_{C(\Omb)}=\|u\|_{L^{\infty}(\Om)}.
\end{equation}
In addition to improving the regularity, uniform $L^\infty (\Om )$ a priori bounds, joint with Leray-Schauder degree theory, allow us to investigate results on the existence of solutions. Leray-Schauder degree theory (see \cite{Leray_Schauder}) is a powerful tool to obtain results on the existence of solutions. To define the degree, uniform $L^\infty  (\Om)$ a priori bounds are needed. Results on uniform $L^\infty (\Om )$ a priori bounds by Gidas and Spruck can be found in \cite{Gidas_Spruck_bd} and by de Figueiredo, Lions, and Nussbaum in \cite{deFigueiredo_Lions_Nussbaum_1982}, see also \cite{Castro_Pardo_RMC_2015}   for a slightly subcritical Dirichlet problem, \cite{Castro_Pardo_DCDS_2017} for a non-convex domain, \cite{Mavinga_Pardo_JMAA} for an elliptic system,\cite{Damascelli_Pardo} for the $p$-laplacian case, and \cite{Pardo_Int_2019_I, Pardo_Int_2019_II}  for a review.  \\
 
The type of $L^\infty (\Om )$ estimates given by \eqref{u:bd:1} are known for slightly subcritical nonlinearities in the  homogeneous Dirichlet problem with the laplacian operator, see \cite[Theorem 1.5]{Pardo_JFPTA_2023}, with the $p$-Laplacian operator, see \cite[Theorem 1.6]{Pardo_RACSAM}, and also with a linear problem at the interior joint with nonlinear boundary conditions on the boundary of power type, see \cite{Chhetri_Mavinga_Pardo_PAMS}. \\

In this paper, we analyze the combined effect of both nonlinearities simultaneously. Our main result establishes the explicit  estimates provided by Theorem \ref{th:CotasLinf}, where both nonlinearities in the interior and on the boundary are slightly subcritical,  not necessarily of power type. \\

This paper is organized in the following way.  Section \ref{sec:main} contains the statement of our main result, Theorem \ref{th:CotasLinf}. The proof of Theorem \ref{th:CotasLinf} is achieved in Section \ref{sec:proof}. By the sake of completeness, we include two appendix; in Appendix \ref{ApB}, we recall the regularity of weak solution to the linear problem with non homogeneous data both at the interior and on the boundary, see Theorem \ref{ApB1}; Appendix  \ref{sec:prelim} deals with further regularity of weak solutions to \eqref{E0.1}, see Theorem \ref{th:reg}.


\section{Our main result}
\label{sec:main}
In this section we state our main result. \\

For $p>1$, we define the trace operator: 
\begin{equation*} 
\Gamma :W^{1,p}(\Om)\rightarrow L^{p}(\p \Om ),
\end{equation*}
in the following way
\medskip

\begin{enumerate}
    \item $\Gamma u= u|_{\p \Om }$ \,  if \, $u\in W^{1,p}(\Om )\cap C(\Omb)$,
\medskip  
    \item $\|\Gamma u\|_{L^{p}(\p \Om )}\leq C\|u\|_{W^{1,p}(\Om )}$,
\medskip
\end{enumerate}
where $C=C(p,|\Om|)$ is a constant and $\p \Om $ is $C^{1}$. 
Since the surjectivity and the continuity of the trace operator, we get 
\begin{equation*} 
\Gamma :W^{1,p}(\Om)\rightarrow W^{1-\frac{1}{p},p}(\p \Om ) \hookrightarrow L^{q}(\p \Om ), \quad \text{for} \quad  1\le q\le \frac{(N-1)p}{N-p},
\end{equation*}
and 
\begin{equation*}
\|\Gamma u \|_{L^{q}(\p \Om )}\le C\|u\| _{W^{1,p}(\Om)}, \quad \text{for some} \quad C>0,
\end{equation*}
this operator is continuous for $1\le q\le \frac{(N-1)p}{N-p}$, and compact for  $1\le q< \frac{(N-1)p}{N-p}$ (see  \cite[Theorem 6.4.1]{Kufner_Fucik} and \cite[Lemma 9.9]{Brezis}). \\

Throughout this paper, we use the Sobolev embedding
\begin{equation} \label{Encaje:Sobolev}
    H^{1}(\Om )\hookrightarrow L^{2^*}(\Om ),
\end{equation}
and  the continuity of the trace operator 
\begin{equation*} 
H^{1}(\Om )\hookrightarrow L^{2_*}(\p \Om ) ,
\end{equation*}
where 
\begin{equation}\label{def:*}
2^*:=\frac{2N}{N-2} \qquad \text{and} \qquad 2_*:=\frac{2(N-1)}{N-2},
\end{equation}
are the  critical Sobolev exponent and the critical exponent in the sense of the trace, respectively.\\

For $1< p,\ p_B\le \infty$, we will denote
\begin{equation}\label{def:N/p&N/pB}
2^*_{N/p}:=\frac{2^*}{p'}=2^*\Big( 1-\frac{1}{p}\Big) \quad \text{and}\quad 2_{*,N/ p_{B}}:=\frac{2_*}{p_{B}'}=2_*\Big( 1-\frac{1}{p_B}\Big) ,
\end{equation}
where $p'$ is the conjugate exponent of $p$, that is $\frac{1}{p}+\frac{1}{p'}=1$. And also
\begin{equation*}
p^*:=\frac{Np}{N-p}, \;  p_*:=\frac{(N-1)p}{N-p}=\frac{(N-1)p^*}{N}, \q{for} 1\le p< N,
\end{equation*}
will denote the  critical Sobolev exponent  and the critical exponent in the sense of the trace, respectively.
\medskip

Given $f:\Om\times \mathbb{R}\rightarrow \mathbb{R}$, we will assume   the following hypothesis on the nonlinearity at the interior:
\smallskip
\begin{enumerate}[label=\textbf{(f\arabic*)}]
\item
\label{f1}
$f$ is a {\it Carath\'eo\-dory} function:
\begin{enumerate}
\item $f(\cdot , t)$ is measurable for each  $t\in \mathbb{R}$;
\item $f(x, \cdot )$, is continuous for each $x \in \Om$.
\end{enumerate}
\item 
\label{f2} $f$ is {\it slightly subcritical (at infinity)}, that is: 
\begin{equation}\label{E0.2}
|f(x,t)| \leq |a(x)|\tilde{f}(|t|),
\end{equation}
with $a(x)\in L^{r}(\Om )$ for $r>\frac{N}{2}$, $\tilde{f}:[0,+\infty )\rightarrow [0,+\infty )$
is continuous, and such that
\begin{equation}\label{E0.3}
\lim _{ t\rightarrow +\infty }\frac{\tilde{f}(t)}{ t ^{2^*_{N/r}-1}}=0.
\end{equation}
\end{enumerate}

\medskip

Likewise, given $f_{B}:\p \Om \times \mathbb{R} \rightarrow \mathbb{R}$, we will assume the following hypothesis for the nonlinearity on the boundary:
\begin{enumerate}[label=\textbf{(f$_B$\arabic*)}]
\item
\label{fB1}
$f_{B}$ is a {\it Carath\'eo\-dory} function:
\begin{enumerate}
\item $f_{B}(\cdot , t)$ is measurable for each  $t\in \mathbb{R}$;
\item $f_{B}(x, \cdot )$ is continuous for each $x \in \p \Om$.
\end{enumerate}
\item
\label{fB2}
$f_{B}$ is {\it slightly subcritical (at infinity)}, that is:
\begin{equation}\label{E0.4}
|f_{B}(x,t)| \leq |a_{B}(x)|\,  \tilde{f}_{B}(|t|),
\end{equation}
with $a_B(x)\in L^{r_B}(\p \Om )$ for $ r_{B}>N-1 $,  and $\tilde{f}_B:[0,+\infty )\rightarrow [0,+\infty )$ is 
continuous, such that
\begin{equation}\label{E0.5}
\lim _{ t\rightarrow +\infty }\frac{\tilde{f}_B(t)}{ t ^{2_{*,N/ r_{B}}-1}}=0.
\end{equation}
\end{enumerate}

\medskip

We say that $u\in H^{1}(\Om )$ is a {\it weak solution} to \eqref{E0.1} if $f(\cdot, u)\in L^{(2^*)'}(\Om )$, and $f_{B}(\cdot, u)\in L^{(2_*)'}(\p \Om )$ are such that for all $\psi\in H^{1}(\Om ),$
\begin{equation*}
\int_{\Om}\na u \na \psi \; dx + \int_{\Om}u\psi \; dx = \int_{\Om}f(x,u)\psi\,dx+ \int_{\p \Om }f_{B}(x,u)\psi\, dS \,,
\end{equation*}
being $(2^*)'
=\frac{2N}{N+2}$ and $(2_*)'
=\frac{2(N-1)}{N}$ the conjugate exponents of $2^*$ and $2_*$, respectively.

\begin{rem}
(i) Let $u\in H^{1}(\Om )$. By Sobolev embeddings, for $f$ and $f_B$ slightly subcritical (satisfying {\rm\ref{f2}} and {\rm\ref{fB2}} respectively), we have
\begin{align*}
\tilde{f}(u)\in L^{\frac{2^*}{2^*_{N/r}-1}}(\Om ), \qquad\text{where}& \qquad \frac{2^*_{N/r}-1}{2^*}=\frac{1}{2}+\frac{1}{N}-\frac{1}{r},\\
\tilde{f}_{B}(u)\in L^{\frac{2_*}{2_{*,N/ r_{B}}-1}}(\p \Om ), \qquad\text{where}& \qquad \frac{2_{*,N/ r_{B}}-1}{2_*}=\frac{N}{2(N-1)}-\frac{1}{r_B}.
\end{align*}
Hence, 
\begin{equation*}
f(\cdot ,u)\in L^{(2^*)'}(\Om ) \quad \text{and} \quad  f_{B}(\cdot ,u)\in L^{(2_*)'}(\p \Om ),
\end{equation*}
(ii) We can allways choose $\tilde{f}$ and $\tilde{f}_{B}$ such that $\tilde{f}(t)>0$ and $\tilde{f}_{B}(t)>0$ for $s>0$. Note that redefining both functions, $\tilde{f}(t )$ and $\tilde{f}_{B}(t)$, as $\max_{[0,t]}\tilde{f}$ and $\max_{[0,t]}\tilde{f}_{B}$, respectively, we can always choose $\tilde{f}$ and $\tilde{f}_{B}$ as non decreasing functions  for $s>0$.
\end{rem}
\smallskip

Now, we will define two new functions, $h$ and $h_B$, which will be essential for the statement of our main result. 
Let us define
\begin{equation}\label{def:h&hB}
h(s):=\frac{s^{2^*_{N/r}-1}}{\tilde{f}(s)} \quad \text{and} \quad h_{B}(s):=\frac{s^{2_{*,N/ r_{B}}-1}}{\tilde{f}_{B}(s) } \quad \text{for} \ s>0,
\end{equation}
Since the nonlinearities $f$ and $f_{B}$ are both slightly subcritical, in other words, satisfy conditions \eqref{E0.2}-\eqref{E0.3} and \eqref{E0.4}-\eqref{E0.5}, respectively, then
\begin{equation}\label{h:infty}
h(s)\rightarrow \infty    \qq{and} h_{B}(s)\rightarrow \infty \qq{as}s\rightarrow \infty.
\end{equation}
\bigskip
 
Our main result is given in the following theorem, which applies for weak solutions to \eqref{E0.1} with slightly subcritical Carathéodory nonlinearities. Our estimates apply to positive or even changing sign solutions.\\

We will denote as $a_M$ the maximum of the corresponding norms of $a\in L^{r}(\Om )$ and of $a_B\in L^{r_B}(\p \Om )$, that is
\begin{equation}\label{def:aM}
a_M:=\max \lbrace \| a\|_{L^{r}(\Om )} , \| a_B\|_{L^{r_B}(\p \Om )} \rbrace .
\end{equation}

Let $u$ be a solution to \eqref{E0.1}. Let $h_{m}$ be defined  as the minimum of $h$ and a certain power of $h_B$, specifically
\begin{equation}\label{def:hm}
h_{m}(s):=\min \left\{ h(s),\, h_{B}^{\frac{2^*_{N/r}-1}{2_{*,N/r_B}-1}}(s)\right\} , 
\end{equation}
with $h$ and $h_B$ defined in \eqref{def:h&hB}.\\

The following Theorem contains our estimates of $h_{m}(\| u\|_{L^\infty (\Om ) })$ in terms of their $H^{1}(\Om)$ norms.

\begin{thm} \label{th:CotasLinf}
Let $f:\Om \times \mathbb{R}\rightarrow \mathbb{R}$ and $f_{B}=\p \Om \times \mathbb{R}\rightarrow \mathbb{R}$ be Carath\'eodory functions, satisfying {\rm \ref{f1}--\ref{f2}} and {\rm \ref{fB1}--\ref{fB2}}, respectively. Let $u\in H^{\, 1}(\Omega )$ be an arbitrary weak solution to \eqref{E0.1}.\\ 

Then, for all $\, \e >0$, there exist $C_{\e }>0$ depending of $\e $,  $N$, $|\Om | $ and $|\p \Om | $, but independent of $u$, such that
\begin{equation}\label{u:bd:1}
h_{m}(\| u\|_{L^\infty (\Om ) })\leq C_{\e } a_{M} ^{A+\e } \left( 1+\| u\|_{H^{1}(\Om )}^{(2^*_{N/r}-2)(A+\e )}\right),
\end{equation} 
where $h_{m}$ is defined by \eqref{def:hm}, $a_{M}$  by \eqref{def:aM}, and
\begin{equation} \label{def:A}
A:=\begin{cases}
\dfrac{\dfrac{1}{2}-\dfrac{N-r}{Nr}}{\dfrac{1}{2}-\dfrac{N-1}{Nr_{B}}} &\text{ if }
\begin{cases}
\text{either }   r\ge N,\\[.2cm]
\text{or } N/2<r< N  \text{ and } r^*\ge \frac{Nr_B}{N-1},
\end{cases}
\\[1cm]
1&\text{ if} \quad N/2<r< N \text{ and } r^*\le \frac{Nr_B}{N-1}.
\end{cases}
\end{equation}
\end{thm}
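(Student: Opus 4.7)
The plan is to combine a Moser-type energy estimate on powers of $u$, the elliptic regularity recalled in Theorem~\ref{ApB1}, and the Gagliardo--Nirenberg interpolation inequality. The mechanism producing the functions $h,h_B,h_m$ is the algebraic identity $\tilde f(t)\,t^{2s-1}=t^{2^*_{N/r}+2s-2}/h(t)$ (and its boundary counterpart with $h_B$), combined with the slightly subcritical property $h(t),h_B(t)\to\infty$ recorded in \eqref{h:infty}. For $s\ge 1$, I would test the weak formulation of \eqref{E0.1} against $\vf=|u|^{2(s-1)}u$ (first truncated to $\min(|u|,k)^{2(s-1)}u$, then sending $k\to\infty$); the chain rule yields
\[
\tfrac{2s-1}{s^2}\|\na(|u|^s)\|_{L^2(\Om)}^2+\|u\|_{L^{2s}(\Om)}^{2s}\le\int_\Om |a|\,\tilde f(|u|)|u|^{2s-1}\,dx+\int_{\p\Om}|a_B|\,\tilde f_B(|u|)|u|^{2s-1}\,dS.
\]
Hölder's inequality with exponents $(r,r')$ on $\Om$ and $(r_B,r_B')$ on $\p\Om$ bounds the right side by $a_M$ times the $L^{r'}(\Om)$ and $L^{r_B'}(\p\Om)$ norms of the two integrands. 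Combined with the Sobolev embedding \eqref{Encaje:Sobolev} and the trace embedding $H^1\hookrightarrow L^{2_*}(\p\Om)$ applied to $|u|^s$, this controls $\|u\|_{L^{2^*s}(\Om)}$ and $\|u\|_{L^{2_*s}(\p\Om)}$ by $\|u\|_{L^{2s}(\Om)}$, $a_M$, and the values of $h,h_B$ evaluated on $u$.

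\textbf{Passage to $L^\infty$ via regularity.} Choosing $s$ so that $2sr'$ matches $2^*$ or $2sr_B'$ matches $2_*$ (depending on which sub-criticality binds first) produces an $L^q$ bound with $q>2^*$; the two regimes in \eqref{def:A} correspond exactly to which choice is active, as reflected in the conditions on $r^*$ versus $Nr_B/(N-1)$. Applying Theorem~\ref{ApB1} to $-\De u+u=f(\cdot,u)$ with Neumann data $f_B(\cdot,u)$ then gives
\[
\|u\|_{L^\infty(\Om)}\le C\bigl(\|u\|_{L^q(\Om)}+\|f(\cdot,u)\|_{L^r(\Om)}+\|f_B(\cdot,u)\|_{L^{r_B}(\p\Om)}\bigr),
\]
and the subcritical bounds \eqref{E0.2}, \eqref{E0.4} convert the data norms into $a_M\,\tilde f(\|u\|_{L^\infty})$ and $a_M\,\tilde f_B(\|u\|_{L^\infty})$.

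\textbf{Gagliardo--Nirenberg closure.} Interpolating
\[
\|u\|_{L^q(\Om)}\le C\|u\|_{L^\infty(\Om)}^{1-\te}\|u\|_{H^1(\Om)}^{\te},\qquad \te\in(0,1),
\]
with $\te$ determined by $q$ and $N$, and then dividing the resulting inequality by $\tilde f(\|u\|_{L^\infty})$ on the interior contribution (by $\tilde f_B(\|u\|_{L^\infty})$ on the boundary contribution), moves all $\|u\|_{L^\infty}$ factors to the left, where they combine into $h_m(\|u\|_{L^\infty})$ by definition \eqref{def:hm}. Elementary algebra with the interpolation exponent $\te$ collapses the power of $\|u\|_{H^1}$ to $(2^*_{N/r}-2)(A+\e)$, with $A$ as in \eqref{def:A}. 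The arbitrarily small $\e>0$ arises because the slightly subcritical conditions \eqref{E0.3}--\eqref{E0.5} are only qualitative: in order to absorb terms one must replace them by quantitative bounds of the form $\tilde f(t)\le \e\,t^{2^*_{N/r}-1}+C_\e$, which translates into an $\e$-loss in the final exponent.

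\textbf{Main obstacle.} The principal technical difficulty is the interior--boundary coupling: the single Moser test function $|u|^{2(s-1)}u$ produces two terms whose Hölder budgets involve different exponents ($2^*_{N/r}$ for the interior, $2_{*,N/r_B}$ for the boundary), and the parameter $s$ must be chosen so that both budgets close simultaneously. The two-case structure of $A$ in \eqref{def:A} records precisely which side binds first. Careful bookkeeping -- in particular, producing a single bound involving the combined quantity $h_m$ rather than two separate bounds in $h$ and $h_B$ -- is the most delicate part of the argument.
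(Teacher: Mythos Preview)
Your proposal shares the high-level ingredients (elliptic regularity for the linear Neumann problem plus Gagliardo--Nirenberg interpolation) with the paper, but the architecture differs in three substantive ways, and the sketch has genuine gaps.

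First, the Moser test-function step you describe is not part of the paper's proof of this theorem. Moser is used only in Appendix~\ref{sec:prelim} (Theorem~\ref{th:reg}) to establish, as a preliminary, that any weak solution is already in $L^\infty(\Om)$. With $\|u\|_{L^\infty}$ available as a finite number, the paper bounds $\|f(\cdot,u)\|_{L^q(\Om)}$ directly: it writes $|\tilde f(u)|^q=|\tilde f(u)|^{q-t}|\tilde f(u)|^t\le M^{q-t}|\tilde f(u)|^t$ with $M:=\tilde f(\|u\|_{L^\infty})$, then applies H\"older with a specific $t$ so that $|a|^q$ lands in $L^{r/q}$ and $|\tilde f(u)|^t$ lands in $L^{2^*/(2^*_{N/r}-1)}$, hence is controlled by $\|u\|_{H^1}$ via Sobolev. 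The analogous splitting is performed on $\|f_B(\cdot,u)\|_{L^{q_B}(\p\Om)}$ with $M_B:=\tilde f_B(\|u\|_{L^\infty})$.

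Second, the Gagliardo--Nirenberg step runs in the opposite direction from yours. The paper feeds the $L^q$ and $L^{q_B}$ bounds on the data into Theorem~\ref{Th:RegLineal} to obtain $\|u\|_{W^{1,m}(\Om)}$ with $m>N$, and then interpolates $\|u\|_{L^\infty}\le C\|u\|_{W^{1,m}}^\sigma\|u\|_{L^{2^*}}^{1-\sigma}$. This yields a self-referential inequality for $\|u\|_{L^\infty}$; the decisive computation (equations \eqref{TRM3}--\eqref{TRM4}) shows that, with the \emph{forced} coupling $q_B=(N-1)q^*/N$ so that the two branches of $m$ in Theorem~\ref{Th:RegLineal} coincide, the total power of $\|u\|_{L^\infty}$ on the right is exactly~$1$. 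Dividing through then isolates $h_m(\|u\|_{L^\infty})$ on the left. This exact matching of exponents is the core of the proof, not ``elementary algebra''; Remark~\ref{rem1} shows that any other relation between $q$ and $q_B$ pushes the exponent above~$1$ and the argument fails to close.

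Third, your explanation of the $\e$-loss is incorrect. The subcritical hypotheses enter only qualitatively, through the definition of $h,h_B$ (so that $h_m\to\infty$); no quantitative bound of the form $\tilde f(t)\le\e\, t^{2^*_{N/r}-1}+C_\e$ is used. The $\e$ in \eqref{u:bd:1} appears because the exponent $\theta(q)=(2^*_{N/r}-1)/(2^*_{N/q}-1)$ must be minimized over the \emph{open} interval $q\in\big(N/2,\min\{r,\,Nr_B/(N-1+r_B)\}\big)$, and its infimum---equal to $A$ in \eqref{def:A}---is reached only at the excluded right endpoint. The two cases in \eqref{def:A} correspond exactly to which of $r$ or $Nr_B/(N-1+r_B)$ is that endpoint.
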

\bigskip

\begin{rem}
Since \eqref{Continuity}, in fact
\begin{equation*}
h_{m}(\| u\|_{C(\Omb ) })\leq C_{\e } a_{M} ^{A+\e } \left( 1+\| u\|_{H^{1}(\Om )}^{(2^*_{N/r}-2)(A+\e )}\right).
\end{equation*} 
\end{rem}

\begin{rem}
From the definitions of $h$ and $h_{B}$ given in \eqref{def:h&hB}, we note that 
\begin{equation*}
h(s) =\frac{s^{2^*_{N/r}-1}}{\tilde{f}(s)} \qq{and}   h_{B}^{\frac{2^*_{N/r}-1}{2_{*,N/r_B}-1}}(s)  
=\left(\frac{s^{2_{*,N/ r_{B}}-1}}{\tilde{f}_{B}(s) }\right)^{\frac{2^*_{N/r}-1}{2_{*,N/r}-1}}.
\end{equation*}
Thus, 
\begin{equation*} 
h_{m}(s)=\min \left\{ \frac{s^{2^*_{N/r}-1}}{\tilde{f}(s)}, \frac{s^{2^*_{N/r}-1}}{\tilde{f}_{B}^{\frac{2^*_{N/r}-1}{2_{*,N/r}-1}}(s)}\right\}. 
\end{equation*} 
\end{rem}
\bigskip

\section{\texorpdfstring{$L^\infty (\Omega )$}{}  a priori explicit estimates}
\label{sec:proof}

In this section, assuming that $f$ and $f_{B}$ are Carathéodory functions satisfying slightly subcritical growth conditions, we prove our main result.

Our method combines elliptic regularity with the Gagliardo--Nirenberg interpolation inequality. Let $u$ be an arbitrary solution to \eqref{E0.1}. First, we find estimates of the nonlinearities in terms of products of the $H^{1}(\Om)$-norm  of $u$ and their $L^{\infty}(\Om)$-norm. With it, using elliptic regularity (see Theorem \eqref{Th:RegLineal}), we obtain estimates of the $W^{1,m}(\Om)$-norm, with $m>N$, of the solutions to \eqref{E0.1}. Finally, applying the Gagliardo--Nirenberg interpolation inequality, (see \cite{Nirenberg_1959}), we obtain an explicit estimate of the $L^{\infty}(\Om)$-norm of $u$  in terms of the $H^{1} (\Om)$ norm of $u$.

\begin{proof}[Proof of Theorem \ref{th:CotasLinf}]
Let $u \in H^{1}(\Om )$ be a weak solution to \eqref{E0.1}. By  Theorem \ref{th:reg}, $u \in H^{1}(\Om )\cap L^\infty  (\Om )$.\\

Firstly, we will estimate both nonlinearities (the interior  and the boundary nonlinearities)  in terms of the $H^{1}(\Om)$-norm and the $L^\infty  (\Om )$-norm of $u$.\\

{\bf Step 1.} {\it $W^{1,m}(\Om )$ estimates for $m>N$.}
\bigskip

By hypothesis,  $\tilde{f}$ and $\tilde{f_B}$ are both increasing. By \eqref{Continuity}we denote 
\begin{align}\label{def:M:MB}
M&:= \tilde{f}(\| u\|_{L^\infty (\Omega )})=\max _{\left[ 0, \| u\|_{L^\infty (\Omega )}\right] } \tilde{f},\\ 
M_{B}&:= \tilde{f_{B}}(\| u\|_{ L^\infty (\Omega )})=\max _{\left[ 0, \| u\|_{L^{ \infty }(\Om )}\right] }\tilde{f_{B}}.\nonumber 
\end{align}

Along this proof, we will use the obvious fact that for any $\g >0$, there exist two constants $C_1$ and $C_2$, only dependent on $\g$, such that 
\begin{equation} \label{des:1}
C_1(1+x^{\g })\leq (1+x)^{\g }\leq C_2(1+x^{\g }), \qquad \text{for all} \quad x\geq 0.
\end{equation}
From now on, all throughout this proof, $C$ denotes several constants independent of $u.$
\medskip

By the growth condition \eqref{E0.2} and the definition of $M$ \eqref{def:M:MB},  we have that
\begin{align}\label{S3-1}
\int_{\Om}|f(\cdot ,u)| ^{q}dx
&\leq \int_{\Om}|a(x)|^{q}\tilde{f}(u)^{q-t+t}\,dx \nonumber \\
&\leq C M^{q-t} \int_{\Om}|a(x)|^{q}\tilde{f}(u)^{t}\,dx,
\end{align}  
for any $t<q$, and any 
\begin{equation}\label{interv:q}
q\in \left(\frac{N}{2},\min \lbrace r,N \rbrace \right).    
\end{equation} 
Using the H\"older's inequality, for all $1< s< \infty$, we can write
\begin{equation}\label{ts'}
\int_{\Om}|a(x)|^{q}\tilde{f}(u)^{t}\,dx  \leq \left( \int_{\Om}|a(x)|^{qs} dx\right) ^{\frac{1}{s}} \left(\int_{\Om}\tilde{f}(u)^{ts'}dx \right)^{\frac{1}{s'}} ,
\end{equation} 
where   $s'$ is such that $\frac{1}{s}+\frac{1}{s'}=1.$
Choosing $s$ and $t<q$, so that  $qs=r$  and $ts'=\frac{2^*}{2^*_{N/r}-1}$, thus,
\begin{align}\label{def:t}
&t:=\frac{2^*}{2^*_{N/r}-1}\left( 1-\frac{q}{r}\right) <q \\
&\iff \frac{1}{q}-\frac{1}{r}  < \frac{2^*_{N/r}-1}{2^*}=1-\frac{1}{r}-\frac{1}{2}+\frac{1}{N} \nonumber \\
  & \iff q >\frac{2N}{N+2}, \nonumber \quad \checkmark
\end{align}
since $q>\frac{N}{2}>\frac{2N}{N+2}$. 
\medskip

\noindent On the other hand, by subcriticality, see  \eqref{E0.3}, and Sobolev embeddings, see \eqref{Encaje:Sobolev},
\begin{align}\label{S3-2}
 \int_{\Om} |\tilde{f}(u)|^{\frac{2^*}{2^*_{N/r}-1}}dx &\leq C\int_{\Om}\left( 1+|u| ^{2^*} \right) \,dx \nonumber \\
 &\leq C\left(1 +\| u\|^{2^*}_{L^{2^*}(\Om )} \right) \,  \\
\label{S3-2b}
  &\leq C\left(1 +\| u\|^{2^*}_{H^{1}(\Om )} \right) \, . 
\end{align}

Finally, substituting \eqref{S3-2b} in the second factor on the RHS of \eqref{ts'},  this result in \eqref{S3-1} and since $1/(qs')=1/q-1/r,$ we get
\begin{equation}\label{Lq:norm}
\left(\int_{\Om} |f(\cdot ,u)|^{q}dx \right) ^{\frac{1}{q}}\leq C M^{1-\frac{t}{q}} \| a\|_{L^{r}(\Om )} \left( 1 +\| u\|_{H^{1}(\Om )} ^{2^*\left( \frac{1}{q}-\frac{1}{r}\right)} \right) . 
\end{equation}\\

Likewise, by the condition \eqref{E0.4}  and the subcriticality \eqref{E0.5}, we get
\begin{align}
\label{L:qB}
\int_{\p \Om }|f_{B}(\cdot ,u)|^{q_{B}}dS
&\leq \int_{\p\Om } |a_B(x)|^{q_B}\tilde{f}_B(u)^{q_B-t_B+t_B}\,dS \nonumber\\
&\leq C M_{B}^{q_B-t_B} \int_{\p\Om } |a_B(x)|^{q_B}\tilde{f}_B(u)^{t_B}\,dS,
\end{align}  
for any $t_B<q_B$, and any 
\begin{equation}\label{interv:qB}
q_B\in (N-1,r_B ).    
\end{equation} 
Using H\"older´s inequality, for all $1< s_{B}< \infty$, we obtain
\begin{align}\label{tBsB'}
&\int_{\p\Om } |a_B(x)| ^{q_B}\tilde{f}_B(u)^{t_B}\,dS \\
& \qquad \leq \left( \int_{\p\Om } |a_B(x)| ^{q_Bs_B} dS\right) ^{\frac{1}{s_B}} \left(\int_{\p\Om }\tilde{f}_B(u)^{t_Bs_B'}dS \right)^{\frac{1}{s_B'}}, \nonumber
\end{align}
where   $s_{B}'$ is such that  $\frac{1}{s_B}+\frac{1}{s_B'}=1$. Choosing, as before, $s_{B},\, t_{B}<q_{B}$, so that $q_Bs_B=r_B$,  and $t_Bs_B'=\frac{2_*}{2_{*,N/r_B}-1}$, thus, 
\begin{align}\label{def:tB}
&t_B :=\frac{2_*}{2_{*,N/r_B}-1}\left( 1-\frac{q_B}{r_B}\right) <q_B \\
& \iff 	\frac{1}{q_B}-\frac{1}{r_B}  <\frac{2_{*,N/r_B}-1}{2_*}=1-\frac{1}{r_B}-\frac{N-2}{2(N-1)}\nonumber \\
& \iff 	\frac{1}{q_B} < \frac{N}{2(N-1)}\ \iff q_B >\frac{2(N-1)}{N}, \nonumber \quad \checkmark
\end{align}
and the last inequality is satisfied since $q_B >N-1$ and  $N>2$.
\medskip

\noindent On the other hand, again by subcriticality, see \eqref{E0.4} and \eqref{E0.5}, 
\begin{align}\label{des:normftil}
\int_{\p \Om } |f_{B}(u)| ^{\frac{2_*}{2_{*,N/r_B}-1}}dx & \leq C\int_{\p\Om }\left( 1+| u| ^{2_*} \right) \,dS \nonumber\\
&\leq C\left(1 +\| u\|^{2_*}_{L^{2_*}(\p\Om )} \right) \\
\label{des:normftilb}
&\leq C\left(1 +\| u\|^{2_*}_{H^{1}(\Om )} \right) , 
\end{align}
then, substituting \eqref{des:normftilb} in the second factor on the RHS of \eqref{tBsB'}, this result in \eqref{L:qB}, and since $1/(q_Bs_B')=1/q_B-1/r_B,$ we get
\begin{align}\label{LqB:norm}
\left(\int_{\p \Om }|\tilde{f}_{B}(\cdot ,u)|^{q_{B}}dx \right) ^{\frac{1}{q_{B}}}&\leq C M_{B}^{1-\frac{t_B}{q_{B}}} \| a_B\|_{L^{r_B}(\p \Om )}\\
&\qquad \times\left( 1+\| u\|^{2_*\left( \frac{1}{q_B}-\frac{1}{r_B}\right) }_{H^{1}(\Om )} \right) . \nonumber
\end{align}

Now, using elliptic regularity, we estimate the norm $\|u\|_{W^{1,m}(\Om )}$ in terms of the corresponding norms of the nonlinearities,  see Theorem\ref{Th:RegLineal}, equation \eqref{W:1:m}. Specifically, using \eqref{Lq:norm} and \eqref{LqB:norm}, we obtain that
\begin{align}\label{RE}
\| u\|_{W^{1,m}(\Om )}&\leq C \left[ M^{1-\frac{t}{q}} \| a\|_{L^{r}(\Om )} \left( 1 +\| u\|_{H^{1}(\Om )} ^{2^*\left( \frac{1}{q}-\frac{1}{r}\right)}\right) \right. \nonumber\\
&  \left. \quad \quad +  M_{B}^{1-\frac{t_B}{q_{B}}} \| a_B\|_{L^{r_B}(\p \Om )}\left( 1+\| u\|^{2_*\left( \frac{1}{q_B}-\frac{1}{r_B}\right) }_{H^{1}(\Om )} \right) \right], 
\end{align}
where $m=\min \lbrace q^*, \frac{Nq_{B}}{N-1}\rbrace $ ($q^*:=\frac{Nq}{N-q}$), whenever $1\leq q<N$, see Theorem \ref{Th:RegLineal}. 

Fixing 
\begin{equation}\label{def:qB}
q_{B}:=\frac{(N-1)q^*}{N}
\implies m=q^*=\frac{Nq_{B}}{N-1}>N,
\end{equation}
(in the forthcoming Remark \ref{rem1}, we explain the necessity of the election for $q_B$), moreover, we have the following equivalences
\begin{equation}\label{equiv:q:qB}
q_{B}:=\frac{(N-1)q^*}{N}\iff \frac{2_*}{q_{B}}=\frac{2^*}{q^*}\iff 2_{*,N/q_B} =  2^*_{N/q}.
\end{equation}
Indeed, we only have to notice that, using the definitions \eqref{def:*}, \eqref{def:N/p&N/pB} and \eqref{def:qB}, we can conclude that
\begin{equation*}
2_{*,N/q_B} = 2_*-\frac{2_*}{q_B} = 2_*+\frac{2^*}{N}-\frac{2^*}{q} = 2^*-\frac{2^*}{q} = 2^*_{N/q}. 
\end{equation*}

With that election of $q_B$, we also need to restrict $q$ in order to satisfy \eqref{interv:qB}. Specifically
\begin{equation}\label{interv:q:2}
q\in \left(\frac{N}{2},\min \left\{ r,\frac{Nr_B}{N-1+r_B}\right\} \right). 
\end{equation}
\\
Indeed, note that, because of the definition of $q_B$, see \eqref{def:qB}, and their restriction, \eqref{interv:qB}, the following inequality have to be satisfied 
\begin{equation*} 
N-1<\frac{(N-1)q^*}{ N} = q_{B} < r_{B}.
\end{equation*}
By \eqref{interv:q}, we obtain that $q^*>N$ so $\frac{(N-1)q^*}{ N}>N-1$. Thus, we only need to check
\begin{align*}
q^*<\frac{Nr_{B} }{N-1}&\iff \frac{1}{q}-\frac{1}{N}>\frac{N-1}{Nr_{B} }\iff \frac{1}{q}>\frac{N-1}{Nr_{B} }+\frac{1}{N} \\
&\iff  q<\frac{Nr_{B}}{N-1+r_{B}},
\end{align*}
from which, using \eqref{interv:q},  and  that 
\begin{equation}\label{N:rB:N}
\frac{Nr_{B}}{N-1+r_{B}}<N  ,  
\end{equation}
we conclude \eqref{interv:q:2}.\\
   
{\bf Step 2.} {\it Gagliardo–Nirenberg interpolation inequality.}
\bigskip

The Gagliardo-Nirengberg's interpolation inequalities (see \cite{Nirenberg_1959}), implies that there exist a constant $C=C(N,q,|\Om | )$, such that 
\begin{equation}\label{des:G-N}
\| u\|_{L^\infty (\Om ) }\leq C \| u\|_{W^{1,m}(\Om )}^{\sigma }\| u\|^{1-\sigma}_{L^{2^*}(\Om )},
\end{equation}
where 
\begin{equation}\label{def:s}
 \frac{1}{\sigma }=1+2^*\left(\frac{2}{N}-\frac{1}{q} \right).
\end{equation}
From \eqref{equiv:q:qB}, due to the definition of $2^*_{N/q}$, see \eqref{def:N/p&N/pB}, it is easy to check that
\begin{equation}\label{def:s2}
\frac{1}{\sigma }=1+2^*\left(\frac{2}{N}\mp 1-\frac{1}{q} \right)=   2^*_{N/q}-1.
\end{equation}\\

Substituting the estimate of $\|u\|_{W^{1,m}(\Om)}$, see \ref{RE}, and using \eqref{des:1} in the inequality \eqref{des:G-N}, we get 
\begin{align}\label{GN1}
\| u\|_{L^\infty (\Om ) }& \leq C\left[ M^{1-\frac{t}{q}} \| a\|_{L^{r}(\Om )} \left( 1 +\| u\|_{H^{1}(\Om )} ^{2^*\left( \frac{1}{q}-\frac{1}{r}\right)}\right)\right. \nonumber \\
&\quad\left. + M_{B}^{1-\frac{t_B}{q_{B}}} \| a_B\|_{L^{r_B}(\p \Om )} \left( 1+\| u\|^{2_*\left( \frac{1}{q_B}-\frac{1}{r_B}\right) }_{H^{1}(\Om )} \right) \right] ^{\sigma } \| u\|_{L^{2^*}(\Om )}^{(1-\sigma) }\nonumber \\
&\leq  C\left[  M^{\left( 1-\frac{t}{q}\right)\sigma } \| a\|^{\sigma }_{L^{r}(\Om )} \left( 1 +\| u\|_{H^{1}(\Om )} ^{2^*\left( \frac{1}{q}-\frac{1}{r}\right)\sigma }\right) \right. \nonumber \\
&\quad\left.  +  M_{B}^{\left( 1-\frac{t_B}{q_{B}}\right) \sigma } \| a_B\|^{\sigma }_{L^{r_B}(\p \Om )}  \left( 1+\| u\|^{2_*\left( \frac{1}{q_B}-\frac{1}{r_B}\right)\sigma }_{H^{1}(\Om )} \right) \right] \| u\|_{L^{2^*}(\Om )}^{(1-\sigma) },
\end{align}

We now look closely at the exponents of $\| u\|_{L^\infty (\Om ) }$ in the RHS, in order to achieve our estimates.\\

Taking into account the definitions of $M$ and $M_B$, see \eqref{def:M:MB}, that $f$ and $f_B$ are non decreasing, and the definition of the functions $h$ and $h_B$, see \eqref{def:h&hB}, we can write the following relation between them, 
\begin{equation}\label{M:h,MB:hB}
M=\frac{\| u\|_{L^\infty (\Om ) }^{2^*_{N/r}-1}}{h(\| u\|_{L^\infty (\Om ) } )} \qquad \text{and} \qquad M_{B}=\frac{\| u\|_{L^\infty (\Om ) }^{2_{*,N/r_B}-1}}{h_{B}(\| u\|_{L^\infty (\Om ) } )}.
\end{equation}
Moreover, using the definitions of $t$, see \eqref{def:t}, and of $2^*_{N/p}$, see \eqref{def:N/p&N/pB}, we can get  
\begin{equation} \label{Ig:1-t/q}
1-\frac{t}{q}=1-\frac{2^*}{2_{N/r}^*-1}\left( \frac{1}{q}-\frac{1}{r}\right) = \frac{2^*_{N/q}-1}{2^*_{N/r}-1}.
\end{equation}
Thus, because of the expression \eqref{Ig:1-t/q}, we deduce
\begin{equation*}
\left( 2^*_{N/r}-1\right) \left( 1-\frac{t}{q}\right) = (2_{*,N/q}-1) , 
\end{equation*}
and because of the definition of $\s$, see \eqref{def:s2} 

\begin{equation}\label{TRM3}
\left( 2^*_{N/r}-1\right) \left( 1-\frac{t}{q}\right)\sigma =1.
\end{equation}\\

Similarly, from the definitions of $t_B$, see \eqref{def:tB}, and of $2_{*,N/q_B}$, see
\eqref{def:N/p&N/pB}, we obtain
\begin{equation} \label{Ig:1-tB/qB}
1-\frac{t_B}{q_B}=1-\frac{2_*}{2_{*,N/r_B}-1}\left( \frac{1}{q_B}-\frac{1}{r_B}\right)= \frac{2_{*,N/q_B}-1}{2_{*,N/r_B}-1}.   
\end{equation}
Likewise, since \eqref{Ig:1-tB/qB}, the definition of $\s$, see \eqref{def:s2}, and the equivalences  \eqref{equiv:q:qB},  we get
\begin{equation}\label{TRM4}
\left( 2_{*,N/r_B}-1\right) \left( 1-\frac{t_B}{q_B}\right)\sigma =\frac{2_{*,N/q_B}-1}{2_{*,N/q}-1}=1.
\end{equation}

Now, we  divide both sides of the inequality \eqref{GN1} by $\|u\|_{L^\infty (\Om ) }$. Using the definitions of  $M$ and $M_B$,  also the two expressions concerning $\s$; \eqref{TRM3}, \eqref{TRM4}, and the definition of  $a_M$, see \eqref{def:aM}, we obtain that

\begin{equation}\label{1:caM}
1\leq Ca_{M}^{\sigma } \left( \frac{ \left( 1+\|u\|_{H^{1}(\Omega  )}^{2^*\left( \frac{1}{q}-\frac{1}{r} \right) \sigma }\right) }{h^{ \frac{1}{2^*_{N/r}-1}  }(\| u\|_{L^\infty (\Om ) })}+     \frac{\left( 1+\| u\|_{H^{1}(\Omega )}^{2_*\left( \frac{1}{q_B}-\frac{1}{r_B}\right)\sigma }\right)}{h_{B}^{\frac{1}{ 2_{*,N/r_B}-1}}(\| u\|_{L^\infty (\Om ) })}       \right)\| u\|_{L^{2^*}(\Om )}^{(1-\sigma) } .
\end{equation}

Now,  the definition of $h_m$ (see \eqref{def:hm}), implies that
\begin{equation*}
\frac{1}{h_{m}^{\frac{1}{2^*_{N/r}-1}}(\| u\|_{L^\infty (\Om ) } )}=\max \left\{ \frac{1}{h^{ \frac{1}{2^*_{N/r}-1}  }(\| u\|_{L^\infty (\Om ) })}, \frac{1}{h_{B}^{\frac{1}{ 2_{*,N/r_B}-1}}(\| u\|_{L^\infty (\Om ) })} \right\} .
\end{equation*}
So, substituting this maximum in the inequality \eqref{1:caM}, we get
\begin{align}\label{des:hm<caM(exp)}
h_{m}^{\frac{1}{2^*_{N/r}-1} }(\| u\|_{L^\infty (\Om ) })&\leq Ca_{M}^{\sigma } \left(  1+\| u\|_{H^{1}(\Om )}^{2^*\left( \frac{1}{q}-\frac{1}{r} \right) \sigma } \right. \\ 
&\qquad \left. +\| u\|_{H^{1}(\Om )}^{2_*\left( \frac{1}{q_B}-\frac{1}{r_B}\right)\sigma }\right) \| u\|_{L^{2^*}(\Om )}^{(1-\sigma) }.  \nonumber
\end{align}

The RHS in the above inequality is upper bounded by a term with the largest exponent of both addends. Let us denote this maximum by
\begin{equation}\label{def:EM}
E_M:=\max \left\{2^*\left( \frac{1}{q}-\frac{1}{r} \right) 
, 2_*\left( \frac{1}{q_B}-\frac{1}{r_B}\right) \right\}.
\end{equation}
Since the inequality \eqref{des:hm<caM(exp)} the definition \eqref{def:EM} and Sobolev's embedding, we obtain
\begin{equation}\label{estim}
h_{m}(\| u\|_{L^\infty (\Om ) })\leq C a_{M}^{\theta }  \left( 1+\| u\|_{H^{1}(\Om )}^{\beta }\right),
\end{equation}
where
\begin{equation}\label{def:theta}
\theta :=\left(2^*_{N/r}-1\right)\s=\frac{2^*_{N/r}-1}{2^*_{N/q}-1},
\end{equation}
and
\begin{equation}\label{def:beta}
\be :=\left(E_M+\frac{1-\s}{\sigma}\right)\te .
\end{equation}

At this moment, we look closely at the definition of $E_M$.

\noindent Firstly by definition of $2^*$ and of $2_*$, see \eqref{def:N/p&N/pB}, secondly by election of $q_B$, see \eqref{def:qB}, and finally rearranging terms, we observe that
\begin{align} \label{des:Exp1>Exp2}
& 2^*\left( \frac{1}{q}-\frac{1}{r} \right) 
\ge 2_*\left( \frac{1}{q_B}-\frac{1}{r_B}\right)  \\
& \iff  \frac{1}{q} \mp \frac{1}{N}-\frac{1}{r}
\ge \frac{N-1}{N}\left( \frac{1}{q_B}-\frac{1}{r_B}\right)  \nonumber \\
&  \iff 
\frac1{r} -\frac1{N}\le \frac{N-1}{Nr_B}.  \nonumber
\end{align}
If $r\ge N$,  then $1/r- 1/N\le 0$, and the last inequality holds. Moreover,   if  
$N/2<r< N$, then the last inequality holds if and only if 
\begin{equation*}
\left(1/r- 1/N\right)^{-1}=:r^*\ge Nr_B/(N-1).
\end{equation*}
Observe that
\begin{equation*}
r^*\ge Nr_B/(N-1)\iff 2^*_{N/r} \ge 2_{*,N/r_B}.
\end{equation*}
On the contrary, the reverse inequality to \eqref{des:Exp1>Exp2}, will be satisfied whenever $N/2<r< N$, and $r^*\le Nr_B/(N-1)$. Hence
\begin{equation}\label{def:EM:2}
E_M =
\begin{cases}
2^*\left( \frac{1}{q}-\frac{1}{r} \right)  & \begin{cases}
\text{if }r\ge N,\\ 
\text{or } N/2<r< N \text{ and }r^*\ge \frac{Nr_B}{N-1},
\end{cases}\\
2_*\left( \frac{1}{q_B}-\frac{1}{r_B}\right)\ &\ \text{   if } N/2<r< N \text{ and }r^*\le \frac{Nr_B}{N-1}.
\end{cases}   
\end{equation}\\

Consequently, we have two cases in the  search for the optimum exponents $\te$ and $\be$ varying $q$, see \eqref{interv:q:2}:\\

\begin{enumerate}
\item[{\bf Case (I):}] {\it Either $r\ge N,$ or $N/2<r< N $ and $r^*\ge \frac{Nr_B}{N-1}$.}\\
\end{enumerate}   
Using the definition of $\be$, \eqref{def:beta}, the first equality for $E_M$ in \eqref{def:EM:2}, and the expression for $\s $, see\eqref{def:s}, and for $2^{*}_{N/r}$, see \eqref{def:N/p&N/pB},
\begin{align}\label{Ig:Beta1}
\be=&\left[2^*\left( \frac{1}{q}-\frac{1}{r} \right) +\frac{1-\s}{\sigma}\right]\te \\
\label{Ig:Beta1b}
=&\left[2^*\left( \frac{1}{q}-\frac{1}{r} \right) +2^*\left(\frac{2}{N}-\frac{1}{q} \right)\right]\te =(2^*_{N/r}-2)\theta .
\end{align}
The function $\theta  :q\mapsto \theta (q) $, defined by \eqref{def:theta} is decreasing. We look for their infimum for $q$ in the interval \eqref{interv:q}. 

{\it Assume $r\ge N$. }
Since \eqref{interv:q:2}--\eqref{N:rB:N}, we deduce that $q\in \left(\frac{N}{2},\frac{Nr_{B}}{N-1+r_{B}}\right) $.

{\it Assume  $N/2<r< N $ and $r^*\ge \frac{Nr_B}{N-1}$.}
Note that 
\begin{align}
\label{r*:rB}
r^*\ge \frac{Nr_B}{N-1} \iff \frac{1}{r}-\frac{1}{N}\le \frac{N-1}{Nr_B}\iff r\ge \frac{Nr_{B}}{N-1+r_{B}},    
\end{align}
and also $q\in \left(\frac{N}{2},\frac{Nr_{B}}{N-1+r_{B}}\right) $.

Hence, in case I,
\begin{equation} \label{te:A}
\inf _{q\in \left(\frac{N}{2},\frac{Nr_{B}}{N-1+r_{B}}\right) }\theta (q)=\theta \left(\frac{Nr_{B}}{N-1+r_{B}}\right)=\frac{\frac{1}{2}-\frac{N-r}{Nr}}{\frac{1}{2}-\frac{N-1}{Nr_{B}}}.
\end{equation}

\begin{enumerate}
\item[{\bf Case (II):}] {\it $N/2<r< N $ and $r^*\le \frac{Nr_B}{N-1}$.}\\
\end{enumerate} 
Likewise, using the definition of $\be$, \eqref{def:beta}, the second equality for $E_M$ in \eqref{def:EM:2}, and  the expressions for $\s$ \eqref{def:s}, for $2^*_{N/p},\ 2_{*,N/ p_{B}}$ \eqref{def:N/p&N/pB}, and the equivalence \eqref{equiv:q:qB}
\begin{align}\label{Ig:Beta2}
\beta  
&=\left[2_*\left( \frac{1}{q_B}-\frac{1}{r_B} \right) +\frac{1-\s}{\sigma}\right]\te \nonumber \\
&=\left[2_*\left( \frac{1}{q_B}\mp 1-\frac{1}{r_B} \right)  +2^*\left(\frac{2}{N} \mp 1-\frac{1}{q} \right)\right]\te \nonumber\\
&=\left[ 2_{*,N/r_B }-2_{*,N/q_B }-2+2^*_{N/q}\right] \te=\left[ 2_{*,N/r_B }-2\right] \te.
\end{align} 
Now, for $q$ satisfying \eqref{interv:q:2}, thanks to \eqref{r*:rB}, we deduce that $q\in (\frac{N}{2},r ) $, hence
\begin{equation}\label{te:1}
\inf _{q\in \left(\frac{N}{2},r\right)}\theta (q)=\theta (r)=1.
\end{equation}\\

Finally, we introduce into the inequality \eqref{estim}, the infima of $\te$ and $\be$ given by \eqref{te:A} and \eqref{Ig:Beta1b} respectively in case I, and by \eqref{te:1} and \eqref{Ig:Beta2}, in case II. Since  these infima are not attained in the set where $q$ belongs, for any $\e >0$, there exists a constant $C_{\e } > 0$ such that,
\begin{equation*}
h_{m}(\| u\|_{L^\infty (\Om ) })\leq C_{\e } a_{M}^{A+\e }\left( 1+\| u\|_{H^{1}(\Om )}^{(2^*_{N/r}-2)(A+\e )}\right) ,
\end{equation*}
where $A$ is defined in \eqref{def:A}, and $C_{\e }=C_{\e } (\e , N, |\Om |,|\p \Om |  )$ and it is independent of $u$.\\
\end{proof}

In the following Remark, we observe the necessity of the election for $q_B$, see \eqref{def:qB}.

\begin{rem} \label{rem1}
Assume for a while that \eqref{def:qB} do not hold and, to fix ideas, that     
\begin{align*}
q_{B}&< \frac{(N-1)q^*}{N}\implies m=\frac{Nq_{B}}{N-1} >N.
\end{align*}
We also have the following equivalence
\begin{equation*}
q_{B}<\frac{(N-1)q^*}{N}
\iff \frac{2^*}{q^*}<\frac{2_*}{q_{B}}
\iff 2_{*,N/q_B}<  2^*_{N/q}. 
\end{equation*}
Indeed, the first equivalence is obvious. With respect to the second one, notice that, due to the definitions of $2^*_{N/q}$ and of $2_{*,N/q_B}$, see \eqref{def:N/p&N/pB}, 
we can conclude that
\begin{align*}
2_{*,N/q_B} &= 2_*-\frac{2_*}{q_B} < 2_*-\frac{2^*}{q^*} = 2^*-\frac{2^*}{q} = 2^*_{N/q}.
\end{align*}
Now, in the Gagliardo-Nirengberg's interpolation inequality,  see \eqref{des:G-N}, 
the parameter $\s$ is given by
\begin{align*}
\frac{1}{\sigma }&=1+2^*\left(\frac{1}{N}-\frac{1}{m} \right)=1+2^*\left(\frac{1}{N}\mp 1-\frac{N-1}{Nq_B} \right)\\
&=   2_{*,N/q_B}-1< 2^*_{N/q}-1 .
\end{align*}

And the expression \eqref{TRM3} becomes now
\begin{equation*}
\left( 2^*_{N/r}-1\right) \left( 1-\frac{t}{q}\right)\sigma =\frac{2_{*,N/q}-1}{2_{*,N/q_B}-1}>1.
\end{equation*}

The above inequality means that the exponent of $\| u\|_{L^\infty (\Om ) }$ in the RHS will dominate 1,  which is the exponent of $\| u\|_{L^\infty (\Om ) }$ in the LHS, and the bounds can not be reached.
\smallskip

Likewise, if $q_{B}> \frac{(N-1)q^*}{N}$, then $m=q^*$ and it can be proved that
\begin{equation*}
\frac{1}{\sigma }=1+2^*\left(\frac{2}{N}\mp 1-\frac{1}{q} \right)=   2^*_{N/q}-1<2_{*,N/q_B}-1,
\end{equation*}
and so
\begin{equation*}
\left( 2_{*,N/r_B}-1\right) \left( 1-\frac{t_B}{q_B}\right)\sigma = (2_{*,N/q_B}-1)\s=\frac{2_{*,N/q_B}-1}{2^*_{N/q}-1} > 1,
\end{equation*}
concluding that necessarily,  $q_B$ have to be chosen as in  \eqref{def:qB}.
\end{rem}

Throughout that proof, we have explicit  estimates of $h_{m}(\| u\|_{L^\infty (\Om ) }) $ expressed in  their $L^{2^*}(\Om)$ norm and $L^{2_*}(\p\Om)$ norm (see \eqref{S3-2} and \eqref{des:normftil}).
Previously, we unify those estimates in  their $H^{1}(\Om)$ norm to simplify the expression.\\
In the next Corollary, we split those  estimates in terms of the $L^{2^*}(\Om)$ norm and the $L^{2_*}(\p\Om)$ norm.

\begin{cor}
Assume that all the hypotheses of Theorem \ref{th:CotasLinf} hold. Then, for all $\, \e >0$, there exist $C_{\e }>0$ depending of $\e $,  $N$, $|\Om | $ and $|\p \Om | $, but independent of $u$, such that
\begin{align*}
h_{m}(\| u\|_{L^\infty (\Om ) })&\leq Ca_{M}^{A+\e} \left(  1+\| u\|_{L^{2^*}(\Om )}^{A_1+\e} +\| u\|_{L^{2_*}(\p\Om )}^{A_2+\e}\| u\|_{L^{2^*}(\Om )}^{A_3 +\e}\right) ,  \nonumber
\end{align*}
where  $A$  is defined in \eqref{def:A},
\begin{equation*}
\begin{aligned}
&\begin{array}{ll}
A_1:=&(2^*_{N/r}-2)A\\ 
A_2:=&0 \\
A_3:=&(2_{*,N/r_B}-2)A
\end{array} 
& \text{if } \begin{cases}
\text{either }   r\ge N,\\[.2cm]
\text{or } N/2<r< N  \text{ and } r^*\ge \frac{Nr_B}{N-1},
\end{cases}\\[1em]
&\begin{array}{ll}
A_1:=&2^*_{N/r}-2  \\
A_2:=&2_{*,N/r_B}-2^*_{N/r} \\
A_3:=&2^*_{N/r}-2
\end{array} 
& \mspace{-270mu} \text{if } \qquad N/2<r< N \text{ and } r^*\le \frac{Nr_B}{N-1}.\\
\end{aligned}
\end{equation*}
\end{cor}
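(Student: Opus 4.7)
The plan is to re-run the proof of Theorem \ref{th:CotasLinf} while carrying $\|u\|_{L^{2^*}(\Om)}$ and $\|u\|_{L^{2_*}(\p\Om)}$ separately, instead of collapsing both into $\|u\|_{H^1(\Om)}$ via the Sobolev and trace embeddings. Concretely, the estimates \eqref{S3-2} and \eqref{des:normftil} already involve these sharper norms; one simply stops before applying \eqref{S3-2b} and \eqref{des:normftilb}. Propagating the refined bounds through \eqref{Lq:norm}, \eqref{LqB:norm}, the elliptic regularity step \eqref{RE}, and the Gagliardo--Nirenberg inequality \eqref{des:G-N}, one arrives at the sharper analog of \eqref{des:hm<caM(exp)}:
\begin{equation*}
h_{m}^{\frac{1}{2^*_{N/r}-1}}\bigl(\|u\|_{L^\infty(\Om)}\bigr)\le C\, a_M^{\sigma}\Bigl(1+\|u\|_{L^{2^*}(\Om)}^{2^*(1/q-1/r)\sigma}+\|u\|_{L^{2_*}(\p\Om)}^{2_*(1/q_B-1/r_B)\sigma}\Bigr)\|u\|_{L^{2^*}(\Om)}^{1-\sigma}.
\end{equation*}

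Next, I raise both sides to the power $(2^*_{N/r}-1)$, apply $(a+b+c)^k\le C(a^k+b^k+c^k)$ with $k=2^*_{N/r}-1\ge 1$, and distribute the Gagliardo--Nirenberg factor $\|u\|_{L^{2^*}(\Om)}^{(1-\sigma)(2^*_{N/r}-1)}$ into each summand. Using $\theta=(2^*_{N/r}-1)\sigma$ together with the identities $(1-\sigma)(2^*_{N/r}-1)=(2^*_{N/q}-2)\theta$, $2^*(1/q-1/r)(2^*_{N/r}-1)\sigma=(2^*_{N/r}-2^*_{N/q})\theta$, and $2_*(1/q_B-1/r_B)(2^*_{N/r}-1)\sigma=(2_{*,N/r_B}-2^*_{N/q})\theta$ (the last via \eqref{equiv:q:qB}, so that $2_{*,N/q_B}=2^*_{N/q}$), the bound reduces to a sum of three summands whose exponents in $\|u\|_{L^{2^*}(\Om)}$ and $\|u\|_{L^{2_*}(\p\Om)}$ can be read off directly.

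Finally, I take the infimum in $q$ exactly as in Theorem \ref{th:CotasLinf}. In Case I, letting $q\to Nr_B/(N-1+r_B)$ gives $\theta\to A$, and a direct computation (using $2^*(N-1)/N=2_*$) yields the key identity $2^*_{N/q}\to 2_{*,N/r_B}$; the $\|u\|_{L^{2_*}(\p\Om)}$-exponent in the mixed summand therefore vanishes (so $A_2=0$), while the $\|u\|_{L^{2^*}(\Om)}$-exponents become $A_1=(2^*_{N/r}-2)A$ and $A_3=(2_{*,N/r_B}-2)A$; the pure $\|u\|_{L^{2^*}(\Om)}$-term with exponent $(2^*_{N/q}-2)\theta\to(2_{*,N/r_B}-2)A$ is then dominated by $1+\|u\|_{L^{2^*}(\Om)}^{A_1}$, since $A_1\ge A_3$ in Case I. In Case II, $q\to r$ gives $\theta\to 1$ and $2^*_{N/q}\to 2^*_{N/r}$, producing $A_1=A_3=2^*_{N/r}-2$ and $A_2=2_{*,N/r_B}-2^*_{N/r}\ge 0$. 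Since neither infimum is attained, an arbitrary $\e>0$ is absorbed into each exponent, exactly as in the proof of Theorem \ref{th:CotasLinf}.

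The main obstacle is bookkeeping, and in particular verifying the identity $2^*_{N/q}\to 2_{*,N/r_B}$ at the Case I endpoint $q=Nr_B/(N-1+r_B)$: this single observation is what forces $A_2=0$ in Case I and explains the asymmetric form of the final bound, including how the Gagliardo--Nirenberg factor $\|u\|_{L^{2^*}(\Om)}^{1-\sigma}$ combines with the boundary summand to produce the mixed term with exponents $A_2,A_3$ in the statement.
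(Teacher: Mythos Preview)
Your proposal is correct and follows essentially the same route as the paper: re-run the proof of Theorem~\ref{th:CotasLinf} but stop before invoking \eqref{S3-2b} and \eqref{des:normftilb}, so that the $L^{2^*}(\Om)$ and $L^{2_*}(\p\Om)$ norms are carried separately through the elliptic regularity and Gagliardo--Nirenberg steps, then optimize in $q$ over the same two cases. You are in fact slightly more explicit than the paper in one spot: the paper's inequality \eqref{hm1:cor} silently replaces the term $\|u\|_{L^{2^*}(\Om)}^{\theta_3}$ (coming from the constant $1$ times the Gagliardo--Nirenberg factor) by a bare $1$, whereas you correctly note that this term must be absorbed into $1+\|u\|_{L^{2^*}(\Om)}^{A_1}$ using $A_1\ge A_3$ in Case~I (and $A_1=A_3$ in Case~II).
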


\begin{proof}
The proof is similar to the proof of the Theorem \eqref{th:CotasLinf}.\\

{\bf Step 1.} {\it $W^{1,m}(\Om )$ estimates for $m>N$.}\\

Substituting \eqref{S3-2} in the second factor on the RHS of \eqref{ts'}, and this is \eqref{S3-1}, we get 
\begin{equation}\label{Lq:norm2}
\left(\int_{\Om} |f(\cdot ,u)|^{q}dx \right) ^{\frac{1}{q}}\leq C M^{1-\frac{t}{q}} \| a\|_{L^{r}(\Om )} \left( 1 +\| u\|_{L^{2^*}(\Om )} ^{2^*\left( \frac{1}{q}-\frac{1}{r}\right)} \right) , 
\end{equation}
with $M$  defined  in \eqref{def:M:MB},  $t$  in \eqref{def:t} and  $q$ in \eqref{interv:q}, respectively. See  the analogy with \eqref{Lq:norm}. \\

On the other hand, replacing \eqref{des:normftil} in the second factor on the RHS of \eqref{tBsB'}, and this in \eqref{L:qB}, we get
\begin{align}\label{LqB:norm2}
\left(\int_{\p \Om }|\tilde{f}_{B}(\cdot ,u)|^{q_{B}}dx \right) ^{\frac{1}{q_{B}}}&\leq C M_{B}^{1-\frac{t_B}{q_{B}}} \| a_B\|_{L^{r_B}(\p \Om )}\\
&\qquad \times\left( 1+\| u\|^{2_*\left( \frac{1}{q_B}-\frac{1}{r_B}\right) }_{L^{2_*}(\p \Om )} \right) , \nonumber
\end{align}
with $M_{B}$ defined  in \eqref{def:M:MB}, $t_{B}$  in \eqref{def:tB},  and
$q_{B}$  in \eqref{interv:qB} respectively.
See the analogy with\eqref{LqB:norm}.
\medskip

By elliptic regularity, we estimate the norm $\| u\|_{W^{1,m}(\Om)}$ in terms of  \eqref{Lq:norm2} and \eqref{LqB:norm2}, see Theorem \ref{Th:RegLineal}, obtaining

\begin{align}\label{RE2}
\| u\|_{W^{1,m}(\Om )}&\leq C \left[ M^{1-\frac{t}{q}} \| a\|_{L^{r}(\Om )} \left( 1 +\| u\|_{L^{2^*}(\Om )} ^{2^*\left( \frac{1}{q}-\frac{1}{r}\right)}\right) \right. \nonumber\\
&  \left. \quad \quad +  M_{B}^{1-\frac{t_B}{q_{B}}} \| a_B\|_{L^{r_B}(\p \Om )}\left( 1+\| u\|^{2_*\left( \frac{1}{q_B}-\frac{1}{r_B}\right) }_{L^{2_*}(\p \Om )} \right) \right], 
\end{align}
with $m>N.$ See also the analogy with \eqref{RE}.\\

{\bf Step 2.} {\it Gagliardo–Nirenberg interpolation inequality.}\\

Substituting \ref{RE2} in the Gagliardo-Nirengberg's  inequality \eqref{des:G-N} and using the inequality \eqref{des:1}, we get 

\begin{align}\label{GN1:2}
\| u\|_{L^\infty (\Om ) }& \leq  C\left[  M^{\left( 1-\frac{t}{q}\right)\sigma } \| a\|^{\sigma }_{L^{r}(\Om )} \left( 1 +\| u\|_{L^{2^*}(\Om )} ^{2^*\left( \frac{1}{q}-\frac{1}{r}\right)\sigma }\right) \right. \nonumber \\
&\quad \left.  +  M_{B}^{\left( 1-\frac{t_B}{q_{B}}\right) \sigma } \| a_B\|^{\sigma }_{L^{r_B}(\p \Om )}  \left( 1+\| u\|^{2_*\left( \frac{1}{q_B}-\frac{1}{r_B}\right)\sigma }_{L^{2_*}(\p \Om )} \right) \right] \| u\|_{L^{2^*}(\Om )}^{(1-\sigma) },
\end{align}

Using the definitions of  $M$ and $M_B$, see \eqref{M:h,MB:hB}, using also \eqref{TRM3}, \eqref{TRM4}, the definition of  $a_M$ (see \eqref{def:aM}), and dividing both sides of the inequality \eqref{GN1:2} by $\|u\|_{L^\infty (\Om ) }$, we obtain that

\begin{equation*}
1\leq Ca_{M}^{\sigma } \left( \frac{ \left( 1+\|u\|_{L^{2^*}(\Omega  )}^{2^*\left( \frac{1}{q}-\frac{1}{r} \right) \sigma }\right) }{h^{ \frac{1}{2^*_{N/r}-1}  }(\| u\|_{L^\infty (\Om ) })}+     \frac{\left( 1+\| u\|_{L^{2_*}(\p \Omega )}^{2_*\left( \frac{1}{q_B}-\frac{1}{r_B}\right)\sigma }\right)}{h_{B}^{\frac{1}{ 2_{*,N/r_B}-1}}(\| u\|_{L^\infty (\Om ) })}       \right)\| u\|_{L^{2^*(\Om) }}^{(1-\sigma)} .
\end{equation*}
Then,
\begin{align*}
h_{m}^{\frac{1}{2^*_{N/r}-1} }(\| u\|_{L^\infty (\Om ) })&\leq Ca_{M}^{\sigma } \left(  2+\| u\|_{L^{2^*}(\Om )}^{2^*\left( \frac{1}{q}-\frac{1}{r} \right) \sigma } \right. \\ 
&\quad \left. +\| u\|_{L^{2_*}(\p\Om )}^{2_*\left( \frac{1}{q_B}-\frac{1}{r_B}\right)\sigma }\right) \| u\|_{L^{2^*}(\Om )}^{(1-\sigma) }.  \nonumber
\end{align*}
where $h_m$ is defined in \eqref{def:hm}. See  the analogy with \eqref{des:hm<caM(exp)}.\\
Clearing $h_m$, we get 
\begin{align*}
h_{m}(\| u\|_{L^\infty (\Om ) })&\leq Ca_{M}^{\sigma(2^*_{N/r}-1) } \left(  1+\| u\|_{L^{2^*}(\Om )}^{2^*\left( \frac{1}{q}-\frac{1}{r} \right)(2^*_{N/r}-1) \sigma} \right. \\ 
&\quad \left. +\| u\|_{L^{2_*}(\p\Om )}^{2_*\left( \frac{1}{q_B}-\frac{1}{r_B}\right) (2^*_{N/r}-1)\sigma }\right) \| u\|_{L^{2^*}(\Om )}^{(1-\sigma)(2^*_{N/r}-1)} . \nonumber
\end{align*}
Substituting in the exponents the parameter $\theta$ (see \eqref{def:theta}), we get

\begin{align}\label{hm1:cor}
h_{m}(\| u\|_{L^\infty (\Om ) })&\leq Ca_{M}^{\theta } \left(  1+\| u\|_{L^{2^*}(\Om )}^{\left[2^*\left( \frac{1}{q}-\frac{1}{r} \right) + \frac{1-\sigma}{\sigma }\right] \theta} \right. \nonumber \\ 
&\quad \left. +\| u\|_{L^{2_*}(\p\Om )}^{2_*\left( \frac{1}{q_B}-\frac{1}{r_B}\right) \theta }\| u\|_{L^{2^*}(\Om )}^{\frac{1-\sigma}{\sigma }\theta }\right).  
\end{align}

Let us define the function $\theta _{1}  =\theta _{1}(q)$ as the first exponent inside the brackets. Using the definitions of $2^*_{N/q}$, see \eqref{def:N/p&N/pB}, and of $\s$, see \eqref{def:s2}, we get
\begin{align*}
\te_1(q):&=\left[2^*\left( \frac{1}{q}-\frac{1}{r} \right) + \frac{1-\sigma}{\sigma }\right] \theta(q) \nonumber\\
&=(2^*_{N/r}-2) \theta (q),
\end{align*}
note that this value is equal to $\be $ in case I of Theorem \eqref{th:CotasLinf}, see \eqref{Ig:Beta1}--\eqref{Ig:Beta1b}. \\

We define the function $\theta _{2}  =\theta _{2}(q) $ as the second exponent. By the definition of $2_{*,N/ q_{B}}$, see \eqref{def:N/p&N/pB}, and the equivalence \eqref{equiv:q:qB}
\begin{align*}
\te_2(q):&=2_*\left( \frac{1}{q_B}\mp1-\frac{1} {r_B}\right) \theta(q)\\
&=(2_{*,N/r_B}-2^*_{N/q})\theta(q).\nonumber
\end{align*} 
We define the function $\theta _{3}=\theta _{3}(q) $ as the third exponent. Using the expression \eqref{def:s2} for $\s$,   
\begin{equation*}
\te _3(q) :=\left(\frac{1-\sigma}{\sigma }\right)\theta (q)  =(2^*_{N/q}-2)\te (q).
\end{equation*}

As before, let $q_B,$ $\theta $ be defined by \eqref{def:qB}, and \eqref{def:theta} respectively. The function 
\begin{equation*}
    (\te_1+\te_2+\te_3)(q)= (2^*_{N/r}+2_{*,N/rB}-4)\te (q),
\end{equation*}
is decreasing, and we look for their infimum  for $q$ in the interval  \eqref{interv:q:2}.  
Thus, as before, we consider the previous two cases.\\
\begin{enumerate}
\item[{\bf Case (I)}] {\it Either $r\ge N,$ or $N/2<r< N $ and $r^*\ge \frac{Nr_B}{N-1}$.}
\end{enumerate}  

In this case, $q\in \left(\frac{N}{2},\frac{Nr_{B}}{N-1+r_{B}}\right).$ For $A$ is defined in \eqref{def:A},
the  exponents are given by 
\begin{align*}
A'_{1}&:=\te_1 \left(\frac{Nr_{B}}{N-1+r_{B}}\right)=(2^*_{N/r}-2) A,
\end{align*}
by
\begin{align*}
A'_{2}&:=\te_2 \left(\frac{Nr_{B}}{N-1+r_{B}}\right)=\frac{2}{N-2}\left( \frac{N-1+r_{B}}{r_{B}}-1-\frac{N-1}{r_B}\right)A =0,
\end{align*}
and by
\begin{align*}
A'_{3}:=\te _3 \left(\frac{Nr_B}{N-1+r_B}\right)&= \left(2^*\left(1-\frac{N-1+r_B}{Nr_B}\right)-2\right)A\\
&=\left(\frac{2(N-2)}{N-2}\left(\frac{r_B-1}{r_B}\right)-2\right)A\\
&=(2_{*,N/r_B}-2)A
\end{align*}
Hence, the inequality \eqref{hm1:cor} can be rewritten as 
    \begin{align*}
h_{m}(\| u\|_{L^\infty (\Om ) })&\leq Ca_{M}^{A+\e } \left(  1+\| u\|_{L^{2^*}(\Om )}^{(2^*_{N/r}-2) A + \e }\right. \\
&\left. \qquad +\| u\|_{L^{2_*}(\p\Om )}^{\e}\| u\|_{L^{2^*}(\Om )}^{(2_{*,N/r_B}-2)A+\e }\right) ,  \nonumber
\end{align*}
where $A$ is defined in \eqref{def:A}.\\

\begin{enumerate}
\item[{\bf Case (II)}] {\it $N/2<r< N $ and $r^*\le \frac{Nr_B}{N-1}$.}
\end{enumerate} 

In that case, $q\in \left(\frac{N}{2},r\right) $ (see \eqref{interv:q:2}). 
The exponents are given by
\begin{align*}
A''_{1}&:=\te_1 \left(r\right)=2^*_{N/r}-2,\\
A''_2&:=\te _2(r)
=\frac{2}{N-2}\left( \frac{N}{r}-1 \mp N-\frac{N-1} {r_B}\right)
= 2_{*,N/r_B}-2^*_{N/r},\\
A''_3&:=\te_3(r)=2^*_{N/r}-2.
\end{align*}
Therefore, the inequality \eqref{hm1:cor} is rewritten as
    \begin{align*}
h_{m}(\| u\|_{L^\infty (\Om ) })&\leq C_{\e } a_{M}^{1+\e }\left( 1+\| u\|_{L^{2^*}(\Om )}^{2^*_{N/r}-2+ \e } \right. \\ 
&\left. \qquad +\|u\|_{L^{2_*}(\p \Om)}^{2_{*,N/r_B}-2^*_{N/r}+\e }\| u\|_{L^{2^*}(\Om )}^{2^*_{N/r}-2+\e }\right) .
\end{align*}

\end{proof}

The next corollary proves that any sequence $\lbrace u_{k}\rbrace \subset  H^{1}(\Om )$ of weak solution to \eqref{E0.1}, uniformly bounded in the  $L^{2^*}(\Om)$ norm and in the $L^{2_*}(\p\Om)$ norm, is also uniformly bounded in the $C(\Omb)$-norm.

\begin{cor}
\label{cor:2}
Let $f:\Om \times \mathbb{R}\rightarrow \mathbb{R}$ and $f_{B}:\p \Om \times \mathbb{R}\rightarrow \mathbb{R}$ be Carath\'eodory functions, satisfying {\rm \ref{f1}--\ref{f2}} and {\rm \ref{fB1}--\ref{fB2}}, respectively. Let $\lbrace u_{k}\rbrace \subset H^{\, 1}(\Omega )$ be a sequence of weak solutions to \eqref{E0.1} satisfying that, there exist $C_{0}>0$, such that 
\begin{equation*}
\|u_{k}\|_{L^{2^*}(\Om)}\leq C_{0} \quad \text{and} \quad \|u_{k}\|_{L^{2_*}(\p \Om)}\leq C_{0}.
\end{equation*}
Then, there exist $C>0$ such that,
\begin{equation*}
\|u_{k}\|_{C(\Omb)}\le C.
\end{equation*}
\end{cor}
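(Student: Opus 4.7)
The plan is to reduce this corollary to the previous one (which estimates $h_{m}(\|u\|_{L^\infty(\Om)})$ in terms of $\|u\|_{L^{2^*}(\Om)}$ and $\|u\|_{L^{2_*}(\p\Om)}$) and then exploit the fact that $h_{m}(s)\to\infty$ as $s\to\infty$ to rule out blow-up of the $L^\infty$ norms.

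Concretely, I would fix any $\e>0$, apply the previous corollary to each $u_k$, and use the hypotheses $\|u_k\|_{L^{2^*}(\Om)}\le C_0$ and $\|u_k\|_{L^{2_*}(\p\Om)}\le C_0$ to bound the right-hand side uniformly in $k$. This gives a constant $K=K(\e,N,|\Om|,|\p\Om|,a_M,C_0)$ such that
\begin{equation*}
h_{m}\bigl(\|u_k\|_{L^\infty(\Om)}\bigr)\le K \q{for all} k,
\end{equation*}
in both Case (I) and Case (II) of the exponent assignment for $A_1,A_2,A_3$.

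Next I would argue that this forces $\{\|u_k\|_{L^\infty(\Om)}\}$ to be bounded. Suppose, for contradiction, that there is a subsequence (not relabeled) with $\|u_k\|_{L^\infty(\Om)}\to\infty$. By \eqref{h:infty} both $h(s)$ and $h_B(s)$ tend to infinity as $s\to\infty$; since the exponent $(2^*_{N/r}-1)/(2_{*,N/r_B}-1)$ is strictly positive (because $r>N/2$ and $r_B>N-1$ imply $2^*_{N/r}>1$ and $2_{*,N/r_B}>1$), the quantity $h_B^{(2^*_{N/r}-1)/(2_{*,N/r_B}-1)}(s)$ also tends to infinity. Hence $h_m(s)=\min\{h(s),h_B^{(2^*_{N/r}-1)/(2_{*,N/r_B}-1)}(s)\}\to\infty$ as $s\to\infty$, so $h_m(\|u_k\|_{L^\infty(\Om)})\to\infty$, contradicting the uniform bound by $K$.

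Finally, since each $u_k$ is a weak solution of \eqref{E0.1}, Theorem \ref{th:reg} gives $u_k\in C(\Omb)$ and \eqref{Continuity} yields $\|u_k\|_{C(\Omb)}=\|u_k\|_{L^\infty(\Om)}$, so the previous step delivers the desired uniform constant $C>0$. The only non-trivial point is the monotone-divergence property of $h_m$; everything else is a direct substitution into the preceding corollary and an appeal to the regularity result already proved.
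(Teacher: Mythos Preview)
Your proposal is correct and follows essentially the same route as the paper: bound $h_m(\|u_k\|_{L^\infty(\Om)})$ uniformly via the preceding estimate, then argue by contradiction using $h_m(s)\to\infty$. In fact your citation of the previous corollary (the one in terms of $\|u\|_{L^{2^*}(\Om)}$ and $\|u\|_{L^{2_*}(\p\Om)}$) is more precise than the paper's own reference, and your explicit check that the exponent $(2^*_{N/r}-1)/(2_{*,N/r_B}-1)>0$ to ensure $h_m\to\infty$ is a detail the paper glosses over.
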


\begin{proof}
We proceed by contradiction, assuming that $\|u_{k}\|_{L^\infty (\Om)}\rightarrow \infty $.
By the Theorem \eqref{th:CotasLinf}  and the remark \ref{rem1}, we get 
\begin{equation}\label{des:hm:C}
h_m(\|u_{k}\|_{C(\Omb)})\le C, \quad \text{for} \quad C>0,
\end{equation}
where, $h_{m}$ is defined in \eqref{def:hm}.
    
Using  \eqref{h:infty}, we deduce that  $h_m(\|u_{k}\|_{C(\Omb)})\rightarrow \infty$ as $k\rightarrow \infty $, which contradicts \eqref{des:hm:C}.\\

\end{proof}

\begin{cor}
Let $f:\Om \times \mathbb{R}\rightarrow \mathbb{R}$ and $f_{B}:\p \Om \times \mathbb{R}\rightarrow \mathbb{R}$ be Carath\'eodory functions, satisfying {\rm \ref{f1}--\ref{f2}} and {\rm \ref{fB1}--\ref{fB2}} respectively. Let $\lbrace u_{k}\rbrace \subset H^{\, 1}(\Omega )$ be a sequence of weak solutions to \eqref{E0.1}. Then, the following statements are equivalent 

\begin{enumerate}
\item[i):] $\|u_{k}\|_{L^{2^*}(\Om)}\leq C_{1} \quad \text{and} \quad \|u_{k}\|_{L^{2_*}(\p \Om)}\leq C_{1}$.
\item[ii):] $\|u_{k}\|_{C(\Omb)}\leq C_{3}$.
\item[iii):] $\|u_{k}\|_{H^{1}(\Om)}\leq C_{2}$.
\end{enumerate}
for some constants $C_i$  independent of $k$, $i=1,2 ,3.$
\end{cor}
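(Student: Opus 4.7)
The plan is to close the cycle of implications $(iii)\Rightarrow (i)\Rightarrow (ii)\Rightarrow (iii)$, invoking at each step a result already available in the paper.

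\textbf{Step 1: $(iii)\Rightarrow (i)$.} This is immediate from the Sobolev embedding $H^1(\Om)\hookrightarrow L^{2^*}(\Om)$ recorded in \eqref{Encaje:Sobolev}, together with the continuity of the trace operator $H^1(\Om)\hookrightarrow L^{2_*}(\p\Om)$ stated in Section \ref{sec:main}. Both embedding constants depend only on $N$, $|\Om|$ and $|\p\Om|$, so a uniform $H^1(\Om)$ bound on $\{u_k\}$ transfers to uniform bounds in the critical norms $L^{2^*}(\Om)$ and $L^{2_*}(\p\Om)$.

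\textbf{Step 2: $(i)\Rightarrow (ii)$.} This is exactly the content of Corollary \ref{cor:2}, so no additional work is required.

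\textbf{Step 3: $(ii)\Rightarrow (iii)$.} Here I would test the weak formulation of \eqref{E0.1} with $\psi=u_k$, which yields
\begin{equation*}
\|u_k\|_{H^1(\Om)}^2=\int_{\Om}f(x,u_k)\,u_k\,dx+\int_{\p\Om}f_B(x,u_k)\,u_k\,dS.
\end{equation*}
By the growth conditions \eqref{E0.2} and \eqref{E0.4}, the right-hand side is controlled by
\begin{equation*}
\|u_k\|_{L^\infty(\Om)}\Bigl(\tilde f(\|u_k\|_{L^\infty(\Om)})\,\|a\|_{L^1(\Om)}+\tilde f_B(\|u_k\|_{L^\infty(\p\Om)})\,\|a_B\|_{L^1(\p\Om)}\Bigr).
\end{equation*}
Since $\|u_k\|_{C(\Omb)}\le C_3$ by $(ii)$, and $\tilde f,\tilde f_B$ are continuous, they are bounded on the interval $[0,C_3]$. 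Moreover, as $\Om$ and $\p\Om$ have finite measure, one has $a\in L^r(\Om)\subset L^1(\Om)$ and $a_B\in L^{r_B}(\p\Om)\subset L^1(\p\Om)$. Consequently the right-hand side is uniformly bounded in $k$, producing the required $H^1$ bound.

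\textbf{Main obstacle.} None of the three implications is technically demanding once Theorem \ref{th:CotasLinf} and Corollary \ref{cor:2} are available. The only step that genuinely uses the PDE (rather than purely functional-analytic embeddings or a previously proved corollary) is Step 3, and the only subtlety there is to notice that the $L^\infty$ bound together with the finite measure of $\Om$ and $\p\Om$ reduces the $L^r$ and $L^{r_B}$ hypotheses on $a$ and $a_B$ to mere $L^1$ information, which suffices after factoring out $\tilde f(\|u_k\|_{L^\infty})$ and $\tilde f_B(\|u_k\|_{L^\infty})$.
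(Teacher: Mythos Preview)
Your proof is correct and follows the same cyclic scheme $(iii)\Rightarrow(i)\Rightarrow(ii)\Rightarrow(iii)$ as the paper, with Steps~1 and~2 identical to the paper's arguments.

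The only genuine difference is in Step~3, $(ii)\Rightarrow(iii)$. The paper appeals to the elliptic regularity estimate \eqref{estim:W1m} of Theorem~\ref{th:reg}: once $\|u_k\|_{C(\Omb)}\le C_3$, the growth conditions give $\|f(\cdot,u_k)\|_{L^r(\Om)}\le \tilde f(C_3)\|a\|_{L^r(\Om)}$ and $\|f_B(\cdot,u_k)\|_{L^{r_B}(\p\Om)}\le \tilde f_B(C_3)\|a_B\|_{L^{r_B}(\p\Om)}$, so \eqref{estim:W1m} yields a uniform $W^{1,m}(\Om)$ bound with $m>N$, and in particular a uniform $H^1(\Om)$ bound. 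Your route---testing the weak formulation with $\psi=u_k$ and bounding the right-hand side via the $L^\infty$ control and $a\in L^1(\Om)$, $a_B\in L^1(\p\Om)$---is more elementary and more direct for the stated goal, since it targets the $H^1$ norm immediately without passing through a stronger $W^{1,m}$ estimate. The paper's approach, on the other hand, actually delivers more than what is claimed (a $W^{1,m}$ bound with $m>N$), at the cost of invoking the regularity machinery from the appendices.
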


\begin{proof}
We prove that $i) \Rightarrow ii) \Rightarrow iii)  \Rightarrow i)$.

The proof of $i) \Rightarrow ii) $ follows directly from the Corollary \ref{cor:2}.

Now, using the elliptic regularity result, see the estimate \eqref{estim:W1m} in the Theorem \ref{th:reg}, and the Gagliardo-Nirenberg interpolation, the proof of $ii) \Rightarrow iii) $ is done.

Finally,  Sobolev's embedding and the continuity of the trace operator, proves that $iii)  \Rightarrow i)$.

\end{proof}

\begin{appendix}

\section{Regularity for the Neumann non homogeneous linear problem}
\label{ApB}
\renewcommand{\thesection}{\Alph{section}}
\counterwithin{equation}{section}
\setcounter{equation}{0}
In this appendix,  we recall the regularity of weak solution to the linear problem with non homogeneous data both at the interior and on the boundary. 

Let us consider the linear nonhomogeneous Neumann problem 
\begin{equation}\label{ApB1}
\left \{
\begin{aligned}
-\Delta u +u=& g(x), \; \; \; x\in \Om , \\
\frac{\p u}{\p \eta  }=& g_{B}(x), \;x\in \p \Om ,
\end{aligned}
\right . 
\end{equation}

where $\Om \subset \mathbb{R}^{N}$, ($N>2$), is an open, connected and bounded domain with $C^{2}$ boundary.

\begin{thm}\label{Th:RegLineal} 
Let us consider the problem \eqref{ApB1}, there exist a positive constant $C>0$ independent of $u$, $h$ and $g_{B}$ such that the following holds:

\begin{enumerate}[label=\rm{(\roman*)}]
\item If $\p \Om \in C^{0,1}$, $g\in L^{q}(\Om )$ and $g_{B}\in L^{q_{B}}(\p \Om )$ with $q\geq 1$ and $q_{B}\geq 1$, then there exist a unique $u\in W^{1,m}(\Om )$ and 
\begin{equation} \label{W:1:m}
\|u\|_{W^{1,m}(\Om )}\leq C\left( \|g\|_{L^{q}(\Om )}+\|g_{B}\|_{L^{q_B}(\p \Om )}\right),
\end{equation}
where $m=\min \lbrace \frac{Nq}{N-q},\frac{Nq_{B}}{N-1}\rbrace $ whenever $1\leq q<N$, or $m=\min \lbrace q,\frac{Nq_{B}}{N-1}\rbrace$ whenever $q\geq N.$ Furthermore, if $q>\frac{N}{2}$ and $q_{B}>N-1$, then
\begin{equation*}
\|u\|_{C^{\nu }(\Omb )}\leq C\left( \|g\|_{L^{q}(\Om )}+\|g_{B}\|_{L^{q_B}(\p \Om )}\right),    
\end{equation*}
where $\nu  =1-\frac{N}{m}$, ($m>N$).

\item  If $\p \Om \in C^{1,1}$, $g\in C^{\nu }(\Omega )\cap L^{q}(\Om )$ and $g_{B}\in L^{q_{B}}(\p \Om )$ with $q>\frac{N}{2}$ and $q_{B}>N-1$, then there exist a unique $u\in C^{\nu }(\Omb)\cap C^{2,\nu  }(\Omega )$.

\item If $\p \Om \in C^{2,\nu  }$, $g\in C^{\nu }(\Omb)$ and $g_{B}\in C^{1,\nu  }(\p \Omega )$ with $\nu  \in (0,1)$, then there exist a unique $u\in C^{2,\nu  }(\Omb)$ and 
\begin{equation*}
\|u\|_{C^{2,\nu  }(\Omb )}\leq C\left( \|g\|_{C^{\nu }(\Omb)}+\|g_{B}\|_{C^{1,\nu  }(\p \Om )}\right),
\end{equation*}
where $C$ is a positive constant independent of $u$, $g$ and $g_{B}$.

\item    If $\p \Om \in C^{2}$, $g\in L^{p}(\Om )$ and $g_{B}\in W^{1-\frac{1}{p},p}(\p \Om )$, then $u\in W^{2,p}(\Om )$ and 
\begin{equation*}
\|u\|_{W^{2,p}(\Om )}\leq C\left( \|g\|_{L^{p}(\Om )}+\|g_{B}\|_{W^{1-\frac{1}{p},p}(\p \Om )}\right),
\end{equation*}
where $C$ is a positive constant independent of $u$, $g$ and $g_{B}$.

\item If $\p \Om \in C^{1,\nu  }$ with $\nu  \in (0,1]$, $g\in C^{\nu }(\Om )$ and $g_{B}\in C^{\nu }(\p \Om )\cap L^{\infty}(\p \Om )$ then if $u$ is a bounded weak solution to \eqref{ApB1}, then $u\in C^{1,\beta }(\Omb)\cap C^{2,\beta }(\Omega )$, where $\beta $ depends on $\nu  $ and $N$. 
\end{enumerate}   
\end{thm}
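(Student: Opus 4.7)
The theorem aggregates classical elliptic regularity results for the Neumann problem $-\Delta u+u=g$, $\partial u/\partial\eta=g_B$, so my plan is to assemble them in the appropriate order rather than invent new machinery. Existence and uniqueness of a weak solution $u\in H^1(\Omega)$ for every part follows immediately from the Lax-Milgram theorem applied to the coercive continuous bilinear form $a(u,v):=\int_\Omega(\nabla u\cdot\nabla v+uv)\,dx$ on $H^1(\Omega)$, since under each set of integrability hypotheses the right-hand side $v\mapsto\int_\Omega gv\,dx+\int_{\partial\Omega}g_B v\,dS$ defines a bounded functional via Sobolev embedding and the trace theorem.

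I would treat the higher-regularity items first. For (iv), the $W^{2,p}$ estimate is the classical Agmon--Douglis--Nirenberg / Calder\'on--Zygmund theory for the oblique derivative problem: a partition of unity localization combined with a $C^2$ boundary flattening reduces it to the half-space Neumann problem, for which the estimate comes from singular integral bounds (see e.g.\ Gilbarg--Trudinger or Lieberman). The Schauder estimate (iii) is obtained by the same localization--flattening scheme, invoking the half-space $C^{2,\nu}$ theory. Both are standard.

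The $W^{1,m}$ bound (i) is the technical heart and would be obtained by splitting $u=u_1+u_2$, where $u_1$ has data $(g,0)$ and $u_2$ has data $(0,g_B)$. For $u_1$, the trivial boundary datum allows an immediate application of (iv), giving $u_1\in W^{2,q}$; the Morrey--Sobolev embedding then yields $\|u_1\|_{W^{1,q^*}}\lesssim\|g\|_{L^q}$ when $1<q<N$, and $\|u_1\|_{W^{1,q}}\lesssim\|g\|_{L^q}$ when $q\ge N$ (the edge case $q=1$ is handled by a Stampacchia-type duality argument). For $u_2$, I would use a direct duality argument: testing against $\varphi\in W^{1,m'}(\Omega)$ in the weak formulation gives $\int_\Omega(\nabla u_2\cdot\nabla\varphi+u_2\varphi)\,dx=\int_{\partial\Omega}g_B\varphi\,dS\le\|g_B\|_{L^{q_B}(\partial\Omega)}\,\|\varphi\|_{L^{q_B'}(\partial\Omega)}$, and the sharp trace inequality $\|\varphi\|_{L^{q_B'}(\partial\Omega)}\lesssim\|\varphi\|_{W^{1,m'}(\Omega)}$ holds exactly when $m=Nq_B/(N-1)$, yielding $\|u_2\|_{W^{1,m}(\Omega)}\lesssim\|g_B\|_{L^{q_B}(\partial\Omega)}$. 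Summing the two contributions gives the advertised exponent $m=\min\{q^*,Nq_B/(N-1)\}$, and Morrey's embedding then supplies the $C^\nu(\overline\Omega)$ bound with $\nu=1-N/m$, noting that $m>N$ is exactly what the thresholds $q>N/2$ and $q_B>N-1$ guarantee.

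Parts (ii) and (v) are bootstraps on top of (i) and (iii). For (ii), the $C^\nu(\overline\Omega)$ bound from (i) combined with interior Schauder estimates (using $g\in C^\nu$ inside $\Omega$) upgrades to $C^{2,\nu}(\Omega)$. For (v), one applies Lieberman's boundary $C^{1,\beta}$ theory for oblique derivative problems with merely H\"older boundary data to gain $C^{1,\beta}(\overline\Omega)$, then interior Schauder yields $C^{2,\beta}(\Omega)$. The main obstacle I anticipate is item (i): obtaining the sharp exponent $Nq_B/(N-1)$ from boundary data only in $L^{q_B}(\partial\Omega)$ without losing a derivative. A naive route through (iv) --- trying to lift $g_B$ to a function in $W^{1-1/p,p}(\partial\Omega)$ --- wastes regularity and produces a strictly worse exponent, so the duality / sharp trace argument above is what makes the estimate optimal and must be executed with care.
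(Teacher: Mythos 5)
Your proposal diverges from the paper in kind: the paper proves Theorem \ref{Th:RegLineal} essentially by citation (Ladyzhenskaya--Ural'tseva and Mavinga--Pardo for the $W^{1,m}$ existence theory in (i), Gilbarg--Trudinger Thm.~6.13 for the H\"older upgrade, Amann / Ladyzhenskaya--Ural'tseva for (iii)--(iv), Lieberman for (v)), whereas you attempt to reconstruct the proofs, and two of your steps contain genuine gaps. First, the blanket claim that existence and uniqueness ``for every part'' follow from Lax--Milgram fails exactly in the range asserted in (i): for $q\geq 1$, $q_B\geq 1$ with $q<\frac{2N}{N+2}$ or $q_B<\frac{2(N-1)}{N}$, the right-hand side does not define a bounded functional on $H^1(\Omega)$, and the solution promised by (i) lies only in $W^{1,m}(\Omega)$ with $m$ possibly below $2$, so it cannot be produced by a Hilbert-space argument at all. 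This is precisely why the paper invokes the $L^q$ Neumann theory of the cited references instead of Lax--Milgram.

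Second, the duality treatment of $u_2$ --- the step you yourself identify as the technical heart --- is not a proof as written. From $a(u_2,\varphi)=\int_{\partial\Omega}g_B\varphi\,dS\lesssim \|g_B\|_{L^{q_B}(\partial\Omega)}\|\varphi\|_{W^{1,m'}(\Omega)}$ you may only conclude that $\varphi\mapsto a(u_2,\varphi)$ is a bounded functional on $W^{1,m'}(\Omega)$; to convert this into $\|u_2\|_{W^{1,m}(\Omega)}\lesssim\|g_B\|_{L^{q_B}(\partial\Omega)}$ you must test against solutions of the dual problem with arbitrary divergence-form data, i.e.\ solve $-\Delta\varphi+\varphi=\psi-\operatorname{div}G$, $\partial\varphi/\partial\eta=G\cdot\eta$, in $W^{1,m'}(\Omega)$ with the corresponding estimate for all $G\in L^{m'}(\Omega;\mathbb{R}^N)$, $\psi\in L^{m'}(\Omega)$ --- which is exactly the kind of $W^{1,p}$ Neumann regularity you are trying to establish, so as it stands the key estimate is assumed rather than proven. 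Moreover, under the boundary regularity actually stated in (i) ($\partial\Omega$ merely Lipschitz) such estimates hold only for a restricted range of exponents, your route for $u_1$ through the $W^{2,q}$ theory of (iv) requires a $C^{1,1}$ or $C^{2}$ boundary, and the case $q=1$, where Calder\'on--Zygmund fails, is only named (``Stampacchia-type duality'') and not carried out. Items (ii)--(v) are fine in outline and agree in spirit with the paper's citations (localization/ADN/Schauder, Lieberman plus interior bootstrap), but part (i) needs either the transposition estimate supplied in full or, as the paper does, an appeal to the existing literature.
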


\begin{proof}
\begin{enumerate}
\item[(i):] It follows from \cite[Ch.3 Sec. 6]{Ladyzhenskaya_Ural’tseva} or \cite[Lem. 2.2]{Mavinga_Pardo} that there exists a unique $u \in W^{1,p}(\Omega)$ solving \eqref{ApB1}. Now if $p > N$, using the Sobolev embedding theorem, one has $u \in C^{\alpha}(\overline{\Omega})$. Then by applying \cite[Thm. 6.13]{Gilbarg_Trudinger} for the corresponding nonhomogeneous Dirichlet problem, we have that $u \in C^{1,\alpha}(\overline{\Omega})$, see also \cite{Mavinga_Pardo}.

\item[(ii):] From part (i) we have that $u \in C^{\alpha}(\overline{\Omega})$. Since $\partial \Omega \in C^{1,1}$, $\Omega$ satisfies the exterior sphere condition at every point on the boundary and using the fact that $g\in C^{\alpha}(\overline{\Omega})$, reasoning as above it follows from \cite[Thm. 6.13]{Gilbarg_Trudinger} that $u \in C^{\alpha}(\overline{\Omega}) \cap C^{2,\alpha}(\overline{\Omega})$.

\item[(iii):] See \cite[Page 55]{Amann1976} or \cite[Chap.3 Sec. 3]{Ladyzhenskaya_Ural’tseva}.

\item[(iv):] See \cite[Page 55]{Amann1976} or \cite[Chap.3 Sec. 9]{Ladyzhenskaya_Ural’tseva}.

\item[(v):] By \cite[Thm. 2]{Lieberman1988}, one has $u \in C^{1,\beta}(\overline{\Omega})$. Then using the bootstrap for the differential equation in $\Omega$, we get the desired regularity in $\Omega$.     
\end{enumerate}
\end{proof}

\section{Regularity of weak solutions}
\label{sec:prelim}
\renewcommand{\thesection}{\Alph{section}}
\numberwithin{equation}{section}
\setcounter{equation}{0}

In this section, we establish auxiliary results on further regularity of weak solutions to \eqref{E0.1}, by assuming that conditions on the growth of the nonlinearities are subcritical or even critical. Using a Moser type procedure, it is known that $u\in L^{q}(\Om )\cap L^{q}(\p \Om )$ for all $q<\infty $ (see \cite[Theorem 3.1]{Marino_Winkert_NonlAnal_2019}). Moreover, using elliptic regularity theory, we state the following result that guarantees, in particular, Hölder regularity of any weak solution to \eqref{E0.1}. 

\begin{thm}\label{th:reg}
Let $\Om \subset \mathbb{R}^{N}$, $f:\Om \times \mathbb{R}\rightarrow \mathbb{R}$ and $f_{B}:\p \Om  \times \mathbb{R}\rightarrow \mathbb{R}$ be Carath\'eodory functions, such that
\begin{align*}
|f(x,s)| &\leq |a(x)| \left( 1+|s|^{2^*_{N/r}-1}\right) \qquad \text{and} \\
|f_{B}(x,s)| &\leq |a_{B}(x)|\left( 1+|s|^{2_{*,N/r_{B}}-1}\right) ,\nonumber
\end{align*}
where 
\begin{align*}
a(x)\in L^{r}(\Om ), \qquad &\text{with} \qquad \frac{N}{2}<r\leq \infty,  \qquad \text{and }\\
a_{B}(x)\in L^{r_B}(\p \Om ), \qquad  &\text{with} \qquad N-1<r_{B}\leq \infty .
\end{align*}
Let $u\in H^{1}(\Om )$ be a weak solution to \eqref{E0.1}, then $u\in L^{q}(\Om )\cap L^{q}(\p \Om )$ for all $1\le q<\infty .$  \\
Moreover,   $u\in W^{1,m}(\Om )\cap  C^{\nu }(\Omb)$, and the following estimates holds
\begin{equation}\label{estim:W1m}
\| u\|_{W^{1,m}(\Om )}\leq C \left( \| f(\cdot,u)\|_{L^{r}(\Om )}+\| f_B(\cdot,u)\|_{L^{r_B}(\p \Om )}\right)
\end{equation}
\qquad and
\begin{equation}\label{estim:C:nu}
\| u\|_{C^{\nu }(\Omb )}\leq C \left( \| f(\cdot,u)\|_{L^{r}(\Om )}+\| f_B(\cdot,u)\|_{L^{r_B}(\p \Om )}\right),
\end{equation}
where $m=\min \left\{ r^*, \dfrac{Nr_{B}}{N-1}\right\}\, $, if $1\leq r<N$, or $m=\min \left\{ r, \dfrac{Nr_{B}}{N-1}\right\}$, if $r\geq N$ and $\nu =1-\frac{N}{m}.$\\
 
Besides, 
\begin{equation*}
\|u\|_{L^{\infty}(\p\Om)}\le \|u\|_{C(\Omb)}=\|u\|_{L^{\infty}(\Om)}.   
\end{equation*}  
\end{thm}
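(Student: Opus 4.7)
The plan is a three-step bootstrap: first establish $L^{\infty}$-boundedness by a Moser-type argument, then use it to place $f(\cdot,u)$ and $f_{B}(\cdot,u)$ in the correct Lebesgue spaces, and finally invoke the linear Neumann regularity recalled in Theorem~\ref{Th:RegLineal}.

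\textbf{Step 1 (Moser iteration).} The first and main step is to show that $u\in L^{\infty}(\Om)\cap L^{\infty}(\p\Om)$. The growth assumptions
\[
|f(x,s)|\le |a(x)|\bigl(1+|s|^{2^*_{N/r}-1}\bigr),\qquad |f_{B}(x,s)|\le |a_{B}(x)|\bigl(1+|s|^{2_{*,N/r_{B}}-1}\bigr),
\]
together with $a\in L^{r}(\Om)$, $r>N/2$, and $a_{B}\in L^{r_{B}}(\p\Om)$, $r_{B}>N-1$, are exactly those under which the De Giorgi–Nash–Moser iteration with nonlinear boundary datum converges. I would invoke \cite[Theorem 3.1]{Marino_Winkert_NonlAnal_2019} directly, since our semilinear problem is a particular case of its quasilinear framework; this yields $u\in L^{\infty}(\Om)\cap L^{\infty}(\p\Om)$ and in particular $u\in L^{q}(\Om)\cap L^{q}(\p\Om)$ for every $1\le q\le \infty$.

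\textbf{Step 2 (integrability of the nonlinearities).} Once $u$ is bounded, the growth conditions together with the elementary inequality $(1+t)^{q}\le 2^{q-1}(1+t^{q})$ give
\[
\|f(\cdot,u)\|_{L^{r}(\Om)} \le C\,\|a\|_{L^{r}(\Om)}\bigl(1+\|u\|_{L^{\infty}(\Om)}^{2^*_{N/r}-1}\bigr),
\]
and an analogous bound for $\|f_{B}(\cdot,u)\|_{L^{r_{B}}(\p\Om)}$. Hence the data of the linear Neumann problem \eqref{ApB1}, with $g=f(\cdot,u)$ and $g_{B}=f_{B}(\cdot,u)$, satisfy $g\in L^{r}(\Om)$, $g_{B}\in L^{r_{B}}(\p\Om)$ with the sharp exponents $r>N/2$ and $r_{B}>N-1$.

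\textbf{Step 3 (linear regularity and conclusion).} Applying Theorem~\ref{Th:RegLineal}(i) with $q=r$, $q_{B}=r_{B}$ delivers $u\in W^{1,m}(\Om)$, where $m=\min\{r^{*},Nr_{B}/(N-1)\}$ if $r<N$ and $m=\min\{r,Nr_{B}/(N-1)\}$ if $r\ge N$; in either case the strict inequalities $r>N/2$ and $r_{B}>N-1$ force $m>N$, so the Morrey embedding built into that theorem yields $u\in C^{\nu}(\Omb)$ with $\nu=1-N/m$, together with the quantitative estimates \eqref{estim:W1m}–\eqref{estim:C:nu}. Continuity up to the boundary identifies $\|u\|_{L^{\infty}(\Om)}$ with $\|u\|_{C(\Omb)}$, and then $\|u\|_{L^{\infty}(\p\Om)}\le\|u\|_{C(\Omb)}$ is immediate, finishing the proof.

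\textbf{Main obstacle.} The substantive difficulty lies in Step~1: translating our weighted subcritical growth, driven by $a\in L^{r}$ and $a_{B}\in L^{r_{B}}$, into the hypotheses of \cite{Marino_Winkert_NonlAnal_2019}, and controlling the Moser iteration simultaneously at the interior and on the boundary; this requires the standard truncated test functions $\min\{|u|^{k\beta}u,\,K\}$ and a careful tracking of constants through the iteration to obtain the uniform $L^{\infty}$ bound on both $\Om$ and $\p\Om$. Once that bound is in hand, Steps~2 and~3 are essentially an application of the growth hypotheses and of the linear regularity already recorded.
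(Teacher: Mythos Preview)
Your proposal is correct and follows essentially the same strategy as the paper: Moser iteration via \cite[Theorem~3.1]{Marino_Winkert_NonlAnal_2019}, then the linear Neumann regularity of Theorem~\ref{Th:RegLineal}. The only difference is that the paper cites Marino--Winkert merely for $u\in L^{q}$ (all finite $q$) and inserts an intermediate application of Theorem~\ref{Th:RegLineal} with exponents $q<r$, $q_{B}<r_{B}$ to first obtain $u\in C^{\nu}(\Omb)$, and only then upgrades to $f(\cdot,u)\in L^{r}$, $f_{B}(\cdot,u)\in L^{r_{B}}$ for the final estimates; your direct use of the $L^{\infty}$ conclusion from the same reference is a legitimate shortcut that collapses these two stages into one.
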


\begin{proof}
Let $u\in H^{1}(\Om )$ be a weak solution to \eqref{E0.1}. Then $u\in L^{q}(\Om )\cap L^{q}(\p \Om )$ for all $q<\infty $ (see \cite[Theorem 3.1]{Marino_Winkert_NonlAnal_2019}).\\
Next, we use elliptic regularity theory. Since Hölder's inequality, 
\begin{equation*}
f(\cdot,u)\in L^{q}(\Om ), \qq{for every} 1<q<r ,
\end{equation*}
and  
\begin{equation*}
f_{B}(x,u)\in L^{q_{B}}(\p \Om ), \qq{for every} 1<q_B<r_B ,
\end{equation*}
and by elliptic regularity (see Theorem \eqref{Th:RegLineal}),  $ u\in W^{1,m}(\Om )$ for $m=\min \left\{ q^*, \dfrac{Nq_{B}}{N-1}\right\}\, \, $  whenever $1\leq q<N$, or $m=\min \left\{ q, \dfrac{Nq_{B}}{N-1}\right\}$, whenever $q\geq N.$ 

Thanks to $r>N/2$ and $r_B>N-1$, we can always choose 
\begin{equation*}
q\in(N/2,r), \quad q_B\in(N-1,r_B).
\end{equation*}
and then  $m>N$, so $u\in C^{\nu }(\Omb)$ for  $\nu  =1-\frac{N}{m}$.\\

Moreover, since $u\in C^{\nu }(\Omb)$, $a\in L^{r}(\Om)$ and $\tilde{f}\in C(\Omb )$,  using the H\"older inequality, then, the product $| a(\cdot)| |\tilde{f}(u(\cdot))| \in L^{r}(\Om)$. Hence, $f(\cdot , u(\cdot))\in L^{r}(\Om)$. \\

Similarly, if $u\in C^{\nu }(\Omb)$, $a_{B}\in L^{r_{B}}(\p\Om)$ and $\tilde{f_{B}}\in C(\p \Om )$, by H\"older inequality, then, the product $| a_{B}(\cdot)| |\tilde{f_{B}}(u(\cdot))| \in L^{r_{B}}(\p\Om)$. Hence, we can conclude that $f_{B}(\cdot , u(\cdot))\in L^{r_{B}}(\p\Om)$. \\

Then \eqref{estim:W1m} and \eqref{estim:C:nu} hold, ending the proof.
\end{proof}

\end{appendix}
\bibliographystyle{abbrv}
\bibliography{ref}
\end{document}